\documentclass[12pt, reqno]{amsart}
\usepackage{amssymb}
\usepackage{verbatim}
\usepackage[vcentermath]{youngtab}
\usepackage{float}
\usepackage{hyperref}
\usepackage{tikz}
\usepackage{hyperref}
\usepackage{multicol}
\usepackage{wrapfig}
\usepackage{multicol}
\usepackage[left=1.1in, right=1.1in, top=1in, bottom=.9in]{geometry}

%############################
%### Extra Comment Macros###
%############################

\usepackage{times}
\usepackage[T1]{fontenc}
\usepackage{mathrsfs}
\usepackage{epsfig}
\usepackage{color}
\usepackage{array} 
\newcolumntype{L}{>{$}l<{$}}
\newcolumntype{R}{>{$}r<{$}}

\usepackage{enumitem}

%############################
%### Theorem Environments ###
%############################

\newtheorem{thm}{Theorem}[section]
\newtheorem{lemma}[thm]{Lemma}

\newtheorem{cor}[thm]{Corollary}
\newtheorem{prop}[thm]{Proposition}

\theoremstyle{remark}

\theoremstyle{definition}
\newtheorem{defn}[thm]{Definition}
\newtheorem{remark}[thm]{Remark}

\newtheorem{example}[thm]{Example}

%######################
%### standard stuff ###
%######################
\def\ZZ{\mathbb{Z}}
\def\QQ{\mathbb{Q}}
\def\PP{\mathbb{P}}
\def\NN{\mathbb{N}}
\def\OO{\mathcal{O}}

\def\FF{\mathbb{F}}

\def\RR{\mathbb{R}}
\def\CC{\mathbb{C}}
\def\GG{\mathbb{G}}

\def\U{\mathcal{U}}

\def\H{\mathcal{H}}

\def\wt{\widetilde}

\def\h{\widehat}

\def\U{\mathcal{U}}

\def\multi#1#2{\ensuremath{\left(\kern-.3em\left(\genfrac{}{}{0pt}{}{#1}{#2}\right)\kern-.3em\right)}}

\newcommand{\Res}{\mathrm{Res}}

\newcommand{\brmod}[1]{\,\,[\mathrm{mod}\,\, #1]}

\usepackage{empheq}

\numberwithin{equation}{section}

\begin{document}

\title[Universal relations for dynamical units]{Dynatomic polynomials, necklace operators, and\\ universal relations for dynamical units}

\author{John R. Doyle}
\address{Department of Mathematics\\
Oklahoma State University\\
Stillwater, OK 74078
}
\email{john.r.doyle@okstate.edu}

\author{Paul Fili}
\address{Department of Mathematics\\
Oklahoma State University\\
Stillwater, OK 74078
}
\email{paul.fili@okstate.edu}

\author{Trevor Hyde}
\address{Department of Mathematics\\
University of Chicago \\
Chicago, IL 60637\\
}
\email{tghyde@uchicago.edu}

\maketitle

\section{Introduction}
Let $f(x) \in K[x]$ be a polynomial with coefficients in a field $K$. For an integer $k \ge 0$, we denote by $f^k(x)$ the $k$-fold iterated composition of $f$ with itself.
The $d$th dynatomic polynomial $\Phi_{f,d}(x) \in K[x]$ of $f$ is defined by the product
\[
    \Phi_{f,d}(x) := \prod_{e\mid d}(f^{d/e}(x) - x)^{\mu(e)},
\]
where $\mu$ is the standard number-theoretic M\"obius function on $\NN$. We refer the reader to \cite[\S 4.1]{ADS} for background on dynatomic polynomials. For generic $f(x)$, the $d$th dynatomic polynomial $\Phi_{f,d}(x)$ vanishes at precisely the periodic points of $f$ with primitive period $d$. 
In this paper we consider the polynomial equation $\Phi_{f,d}(x) = 1$ and show that it often has $f$-preperiodic solutions determined by arithmetic properties of $d$, independent of $f$. Moreover, these $f$-preperiodic solutions are detected by cyclotomic factors of the \emph{$d$th necklace polynomial}:
\[
    M_d(x) = \frac{1}{d}\sum_{e\mid d}\mu(e)x^{d/e} \in \QQ[x].
\]
Our results extend earlier work of Morton and Silverman \cite{MS}, but our techniques are quite different and apply more broadly.

We begin by recalling some notation and terminology. The \emph{cocore} of a positive integer $d$ is $d/d'$ where $d'$ is the largest squarefree factor of $d$. If $m\geq 0$ and $n\geq 1$, then the \emph{$(m,n)$th generalized dynatomic polynomial} $\Phi_{f,m,n}(x)$ of $f(x)$ is defined by $\Phi_{f,0,n}(x) := \Phi_{f,n}(x)$ and
\[
    \Phi_{f,m,n}(x) := \frac{\Phi_{f,n}(f^m(x))}{\Phi_{f,n}(f^{m-1}(x))}
\]
for $m \geq 1$. The roots of $\Phi_{f,m,n}$ for generic $f$ are those preperiodic points which enter into an $n$-cycle after exactly $m$ iterations under $f$.

Theorem \ref{thm intro main} is our main result; it is proved in Section \ref{sec results}.

\begin{thm}
\label{thm intro main}
Let $K$ be a field, let $f(x) \in K[x]$ be a polynomial of degree at least 2, and let $c, d, m, n$ be integers with $c, m\geq 0$ and $d, n \geq 1$. Suppose that
\begin{enumerate}
    \item\label{item:divisible} either $m > c$ or $n\nmid d$,
    \item\label{item:thm1-2} the cocore of $d$ is at least $m - \max(c-1, 0)$, and
    \item\label{item:thm1-3} $x^n - 1$ divides the $d$th necklace polynomial $M_d(x)$ in $\QQ[x]$.
\end{enumerate}
Then $\Phi_{f,m,n}(x)$ divides $\Phi_{f,c,d}(x) - 1$.

Alternatively, if $d > 1$, $c - 1 \geq m$, and $n = 1$, then $\Phi_{f,m,n}(x)$ divides $\Phi_{f,c,d}(x) - 1$.
\end{thm}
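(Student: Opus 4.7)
The plan is to verify the polynomial divisibility $\Phi_{f,m,n}(x) \mid \Phi_{f,c,d}(x) - 1$ by showing that $\Phi_{f,c,d}(\alpha) = 1$ at every root $\alpha$ of $\Phi_{f,m,n}(x)$. Since this is an algebraic identity in the coefficients of $f$, one may assume $f$ is generic and take $\alpha$ with a maximally non-degenerate orbit: setting $\beta_j := f^j(\alpha)$ for $j \geq 0$ and $\gamma_r := \beta_{m+r}$ for $r \in \ZZ/n\ZZ$, the points $\beta_0, \ldots, \beta_{m-1}, \gamma_0, \ldots, \gamma_{n-1}$ are pairwise distinct. Using $\Phi_{f,c,d}(\alpha) = \Phi_{f,d}(\beta_c)/\Phi_{f,d}(\beta_{c-1})$ for $c \geq 1$ and $\Phi_{f,0,d}(\alpha) = \Phi_{f,d}(\alpha)$, the task reduces to evaluating
\[
\Phi_{f,d}(\beta_j) = \prod_{e \mid d} (\beta_{j + d/e} - \beta_j)^{\mu(e)}
\]
for $j \in \{c-1, c\}$ (or $j = 0$ when $c = 0$).

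The three hypotheses feed into this computation as follows. The cocore bound (2) forces every squarefree $e \mid d$ to satisfy $d/e \geq m - \max(c-1, 0)$, so each shifted index $j + d/e$ reaches or exceeds $m$, placing $\beta_{j+d/e} = \gamma_{(j+d/e-m) \bmod n}$ in the cycle. Hypothesis (1) then ensures no factor vanishes: either $m > c$ so the base point $\beta_j$ is strictly preperiodic and avoids the cycle entirely, or $n \nmid d$ so no $d/e$ is divisible by $n$ and $\gamma_{(j+d/e-m)} \neq \gamma_{(j-m)} = \beta_j$ when $j \geq m$. Regrouping by the residue $r := (d/e) \bmod n$ then yields
\[
\Phi_{f,d}(\beta_j) \;=\; \prod_{r = 0}^{n-1}\bigl(\gamma_{(j+r-m) \bmod n} - \beta_j\bigr)^{a_r}, \qquad a_r := \sum_{\substack{e \mid d \\ d/e \equiv r \,(\mathrm{mod}\,n)}} \mu(e).
\]
Hypothesis (3) is precisely the statement that $a_r = 0$ for every $r$: since $d\,M_d(\zeta) = \sum_{e \mid d}\mu(e)\zeta^{d/e} = \sum_r a_r \zeta^r$, the divisibility $x^n - 1 \mid M_d(x)$ is equivalent, by discrete Fourier inversion on $\ZZ/n\ZZ$, to every $a_r$ vanishing. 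With all exponents zero the product collapses to $1$, yielding $\Phi_{f,d}(\beta_c) = \Phi_{f,d}(\beta_{c-1}) = 1$ and hence $\Phi_{f,c,d}(\alpha) = 1$.

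The main obstacle will be the careful case analysis ensuring nonvanishing of every factor across the regimes $c < m$, $c = m$, and $c > m$; the transitional case $c = m$ is delicate because $\beta_c = \gamma_0$ sits in the cycle while $\beta_{c-1}$ is the last tail point, but in either regime the exponent pattern $\{a_r\}$ is determined purely by $d$ and $n$ and vanishes under hypothesis (3). For the alternative statement, $n = 1$ means $\alpha$ is preperiodic to a unique fixed point $\gamma_0$, and $c - 1 \geq m$ forces $\beta_c = \beta_{c-1} = \gamma_0$; hence $\Phi_{f,c,d}(\alpha) = \Phi_{f,d}(\gamma_0)/\Phi_{f,d}(\gamma_0) = 1$ tautologically, with $d > 1$ ensuring $\Phi_{f,d}(\gamma_0) \neq 0$ since $\gamma_0$ has exact period $1 \neq d$.
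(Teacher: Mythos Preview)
Your proof is correct and follows the same conceptual arc as the paper's---reduce to generic $f$, show $\Phi_{f,d}(f^j(\alpha)) = 1$ for the relevant $j$, and take the ratio---but you arrive there by a more elementary, self-contained route. The paper builds an abstract framework of composition rings and $\Psi$-modules (culminating in Proposition~\ref{prop necklace vanishing} and Lemma~\ref{lem congruent operators}), in which conditions (2) and (3) are encoded as the vanishing of the necklace operator $\varphi_d$ in the quotient $\ZZ\Psi_{m,n}$; this vanishing is then transported to $K[x]/(f^{m+n}(x)-f^m(x))$ via the composition-algebra action. You bypass this machinery entirely by working directly with the orbit $\beta_0,\ldots,\beta_{m-1},\gamma_0,\ldots,\gamma_{n-1}$ and regrouping the product $\prod_{e\mid d}(\beta_{j+d/e}-\beta_j)^{\mu(e)}$ by the residue $r = (d/e)\bmod n$. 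Your exponents $a_r$ are precisely the coefficients of $d\,M_d(x)$ reduced modulo $x^n-1$, so their vanishing under (3) is just linear independence of $1,x,\ldots,x^{n-1}$ in $\QQ[x]/(x^n-1)$---this is exactly the concrete content of the paper's Lemma~\ref{lem free module} and Proposition~\ref{prop necklace vanishing}, unpacked. The paper's formalism pays off elsewhere in the article (unifying dynatomic, cyclotomic, and necklace polynomials as images of the same operator $\varphi_d$, and proving Theorem~\ref{thm intro hyperplane}), but for this theorem alone your direct argument is shorter and needs no special setup. One minor organizational difference: the paper proves the $c=0$ case first and then bootstraps to $c>0$ by applying it at shifted tail-lengths $m-c$ and $m-c+1$, whereas you handle all $c$ uniformly by evaluating at both $\beta_{c-1}$ and $\beta_c$.
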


\begin{remark}
While we generally discuss polynomials over arbitrary fields, the polynomial $M_d$ will always be considered to be a polynomial over $\QQ$; in particular, all statements regarding divisibility or factorizations of $M_d$ should be interpreted in characteristic zero.
\end{remark}

\subsection{Dynamical units}

Let $K$ be a number field with ring of integers $\OO_K$. Morton and Silverman \cite{MS} define \emph{dynamical units} to be algebraic integral units constructed in one of several closely related ways from differences of preperiodic points of a given monic polynomial $f(x) \in \OO_K[x]$. If $\Phi_{f,m,n}(x)$ divides $\Phi_{f,c,d}(x) - 1$, then for each root $\alpha \in \overline{K}$ of $\Phi_{f,m,n}(x)$,
\begin{equation}
\label{eqn unit relation}
    1 = \Phi_{f,d}(\alpha) = \prod_{\beta}(\alpha - \beta),
\end{equation}
where the product ranges over all the roots $\beta$ of $\Phi_{f,c,d}(x)$ with multiplicity. The differences $\alpha - \beta$ are dynamical units and \eqref{eqn unit relation} is a multiplicative relation between dynamical units. If the conditions of Theorem \ref{thm intro main} are satisfied for $m, n, c, d$, then \eqref{eqn unit relation} holds for all $f(x)$ with degree at least 2; we view these as \emph{universal relations} for dynamical units.
Examples of universal relations for dynamical units have been found by Morton and Silverman \cite[Thm. 7.5]{MS} and Benedetto \cite[Thm. 2]{benedetto}.
We give some results on universal relations, and relate them to previous work, in Section \ref{sec:dynamical units} below.

\subsection{Cyclotomic factors of necklace polynomials}
Of the conditions in Theorem \ref{thm intro main}, (3) is the most subtle. Necklace polynomials $M_d(x)$ have several combinatorial interpretations; for example, if $q$ is a prime power, then $M_d(q)$ is the number of irreducible degree-$d$ monic polynomials in $\FF_q[x]$. These interpretations give no indication as to when, if ever, $M_d(x)$ will vanish at all the $n$th roots of unity. However, as observed in \cite{hyde_cyclo}, necklace polynomials are generally divisible by many cyclotomic polynomials. Recall that the \emph{$n$th cyclotomic polynomial} $\Phi_n(x)$ is the $\QQ$-minimal polynomial of a primitive $n$th root of unity. 

\begin{example}
$M_{105}(x)$ factors over $\QQ$ as
\begin{align}
\label{eqn neck factor}
    M_{105}(x) &= \tfrac{1}{105}(x^{105} - x^{35} - x^{21} - x^{15} + x^7 + x^5 + x^3 - x)\nonumber\\
    &= e(x)\cdot \Phi_8\cdot \Phi_6\cdot \Phi_4\cdot \Phi_3\cdot \Phi_2\cdot \Phi_1\cdot x,
\end{align}
where $e(x) \in \QQ[x]$ is a degree 92, irreducible, non-cyclotomic polynomial. Since 
\[
    x^n - 1 = \prod_{m\mid n} \Phi_m(x),
\]
the factorization \eqref{eqn neck factor} implies that $M_{105}(x)$ is divisible by $x^n - 1$ for $n = 1, 2, 3, 4, 6, 8$. Note that $d = 105 = 3 \cdot 5 \cdot 7$ is squarefree, hence the cocore of $d$ is 1. Thus Theorem \ref{thm intro main} implies that for any polynomial $f(x) \in K[x]$ of degree at least 2, $\Phi_{f,105}(x) - 1$ is divisible by $\Phi_{f,1,n}(x)$ for $n = 1, 2, 3, 4, 6, 8$ and $\Phi_{f,0,n}$ for $n = 2, 4, 6, 8$.
\end{example}

In light of Theorem \ref{thm intro main} one might naturally ask how often $x^n - 1$ divides $M_d(x)$. Figure \ref{fig:neck data plot} suggests that $M_d(x)$ is divisible by several $x^n - 1$ for all $d \geq 1$.
\begin{figure}[h]
    \centering
    \includegraphics[scale = .5]{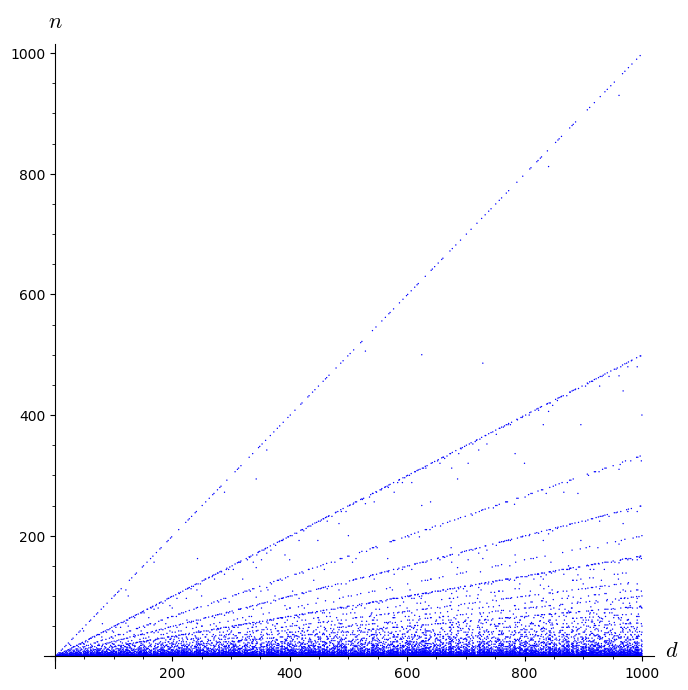}
    \caption{Pairs $(d,n)$ with $d, n \leq 1000$ for which $x^n - 1$ divides $M_d(x)$.}
    \label{fig:neck data plot}
\end{figure}
Hyde \cite{hyde_cyclo} characterized the cyclotomic factors of necklace polynomials in terms of hyperplane arrangements in finite abelian groups. Let $\h{\U}_n := \mathrm{Hom}((\ZZ/(n))^\times, \CC^\times)$ denote the group of Dirichlet characters of modulus $n$. If $q$ is a unit modulo $n$, then the \emph{hyperplane} $\H_q \subseteq \h{\U}_n$ is defined to be the set
\[
    \H_q := \{\chi \in \h{\U}_n : \chi(q) = 1\}.
\]

The following theorem gives an alternative to condition (3) in Theorem \ref{thm intro main} in terms of hyperplanes in the group of Dirichlet characters. We prove Theorem \ref{thm intro hyperplane} in Section \ref{sec cyclo factors of neck}.

\begin{thm}
\label{thm intro hyperplane}
Let $d, n \geq 1$. Then $x^n - 1$ divides $M_d(x)$ if and only if
\[
    \h{\U}_n \subseteq \bigcup_{\substack{p\mid d\\p\nmid n}} \H_p.
\]
\end{thm}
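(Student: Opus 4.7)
The plan is to recast $x^n - 1 \mid M_d(x)$ as a vanishing condition on Dirichlet character-twisted sums and then read off the hyperplane containment via a multiplicative Möbius identity. Writing $R_d(x) := d\,M_d(x) = \sum_{e \mid d,\ e \text{ squarefree}} \mu(e)\,x^{d/e}$, divisibility in $\QQ[x]$ is equivalent to $R_d(\zeta) = 0$ for every $\zeta \in \mu_n$, hence to $\Phi_k \mid R_d$ for every $k \mid n$. Fourier analysis on each $(\ZZ/k)^\times$ expresses this in turn as the vanishing of $\sum_{a \in (\ZZ/k)^\times} \bar\chi(a)\,R_d(\zeta_k^a)$ for every $\chi \in \h\U_k$.

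To evaluate these twisted sums, I would interchange the order of summation so that the inner sum is $\sum_a \bar\chi(a)\,\zeta_k^{am}$ with $m = d/e$; for $\chi$ primitive mod $k$, this collapses via the standard Gauss-sum identity to $\tau(\bar\chi)\,\chi(m)$ when $\gcd(m,k)=1$ and to $0$ otherwise. The outer sum over squarefree $e \mid d$ then factors multiplicatively using
\[
    \sum_{\substack{e \mid N\\ e \text{ sqfree}}} \mu(e)\,\bar\chi(e) \;=\; \prod_{p \mid N}\bigl(1 - \bar\chi(p)\bigr),
\]
yielding an expression proportional to $\prod_{p \mid d}\bigl(1 - \bar\chi(p)\bigr)$ after collecting the local contributions of the primes of $d$.

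Since $\tau(\bar\chi) \neq 0$ for primitive $\chi$, the global vanishing condition distills to $\prod_{p \mid d}(1 - \bar\chi(p)) = 0$ for every $\chi \in \h\U_n$. Primes $p \mid n$ drop out of this product: $\bar\chi(p) = 0$ makes the corresponding factor equal to $1$, so only primes $p \mid d$ with $p \nmid n$ can supply a vanishing factor. The product vanishes iff some such $p$ satisfies $\chi(p) = 1$, i.e., $\chi \in \H_p$; quantifying over $\chi$ yields precisely the stated containment $\h\U_n \subseteq \bigcup_{p \mid d,\ p \nmid n} \H_p$.

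The main obstacle is the bookkeeping for non-primitive characters and for primes shared by $d$ and $n$: the Gauss-sum reduction above assumed primitivity, and for $\chi \in \h\U_k$ of conductor strictly dividing $k$ the twisted sum must be evaluated via Ramanujan sums instead. A clean strategy is to first reduce to squarefree $d$ via the identity $R_d(x) = R_{\rad(d)}\!\bigl(x^{\cocore(d)}\bigr)$, which converts the question to $x^{n/\gcd(n,\cocore(d))} - 1 \mid R_{\rad(d)}$, and then to decompose $\h\U_n$ by conductor so that the union of per-modulus hyperplane conditions collapses to the single union stated in the theorem.
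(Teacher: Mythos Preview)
Your route via roots of unity and Gauss sums is genuinely different from the paper's. The paper never evaluates at roots of unity: it uses Lemma~\ref{lem free module} to identify $x^n-1\mid M_d$ with the vanishing of the necklace operator $\varphi_d$ in $\QQ\Psi_{0,n}$, then factors $\varphi_d=\varphi_{\tilde d}\,\varphi_{d/\tilde d}$ where $\tilde d$ is the largest factor of $d$ coprime to $n$. Since $\varphi_{\tilde d}$ lies in the group ring $\QQ[\U_n]\subseteq\QQ\Psi_{0,n}$ and factors as $[\tilde d]\prod_{p\mid\tilde d}(1-[p]^{-1})$, the characters $\chi\in\h\U_n$ detect its vanishing directly---no Gauss sums, no conductor decomposition. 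The converse direction is a single linear-algebra step: projecting $S_d(x)$ onto the subspace of $\QQ[x]/(x^n-1)$ spanned by $x^j$ with $\gcd(j,n)=1$ isolates the $\varphi_{\tilde d}$ contribution from the rest.

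Your sketch has a real gap exactly where you flag it, and your proposed patch does not close it. After the Gauss-sum reduction, the outer sum for a \emph{primitive} $\chi$ of conductor $k\mid n$ is $\sum_e\mu(e)\chi(d/e)=\prod_{p^a\|d}\bigl(\chi(p)^a-\chi(p)^{a-1}\bigr)$, and here $\chi(p)=0$ means $p\mid k$, not $p\mid n$. So your claim that ``primes $p\mid n$ drop out'' applies to the induced character mod $n$, not to the primitive character you actually computed with; and when $p\mid k$ and $p^2\mid d$ the factor $\chi(p)^{a-1}(\chi(p)-1)$ vanishes for a reason that has nothing to do with any hyperplane $\H_q$ with $q\nmid n$. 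Your reduction $R_d(x)=R_{\rad(d)}\bigl(x^{\cocore(d)}\bigr)$ replaces $n$ by $n'=n/\gcd(n,\cocore(d))$ on the divisibility side, but the hyperplane condition in $\h\U_{n'}$ is indexed by primes $p\nmid n'$, not $p\nmid n$, so the two sides of the equivalence no longer line up after the reduction. The move that actually tames the shared primes is to strip them from $d$ rather than from $n$---pass to $\tilde d$---which is precisely the reduction the paper makes.
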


Theorem \ref{thm intro hyperplane} says $x^n - 1$ divides $M_d(x)$ if and only if the finite abelian group $\h{\U}_n$ of modulus $n$ Dirichlet characters is covered by an arrangement of ``hyperplanes'' determined by the prime factors of $d$. In Example \ref{ex m = 65} we explain how the 5 distinct prime factors of
\[
    d = 440512358437 = 47^2 \cdot 73 \cdot 79 \cdot 151 \cdot 229
\]
correspond to the 5 lines in $(\RR/4\ZZ)^2$ in Figure \ref{fig:65 intro}, and how the fact that the lines cover all the lattice points translates, via Theorem \ref{thm intro hyperplane}, into the fact that $x^{65} - 1$ divides $M_{440512358437}(x)$. Since the cocore of $d$ is 47, Theorem \ref{thm intro main} implies that
\[
    \Phi_{f,m,65}(x) \text{ divides } \Phi_{f,440512358437}(x) - 1,
\]
for all $f(x) \in K[x]$ with $\deg(f) \geq 2$ and $0 \leq m \leq 47$.
\begin{figure}[h]
    \centering
    \includegraphics[scale = .14]{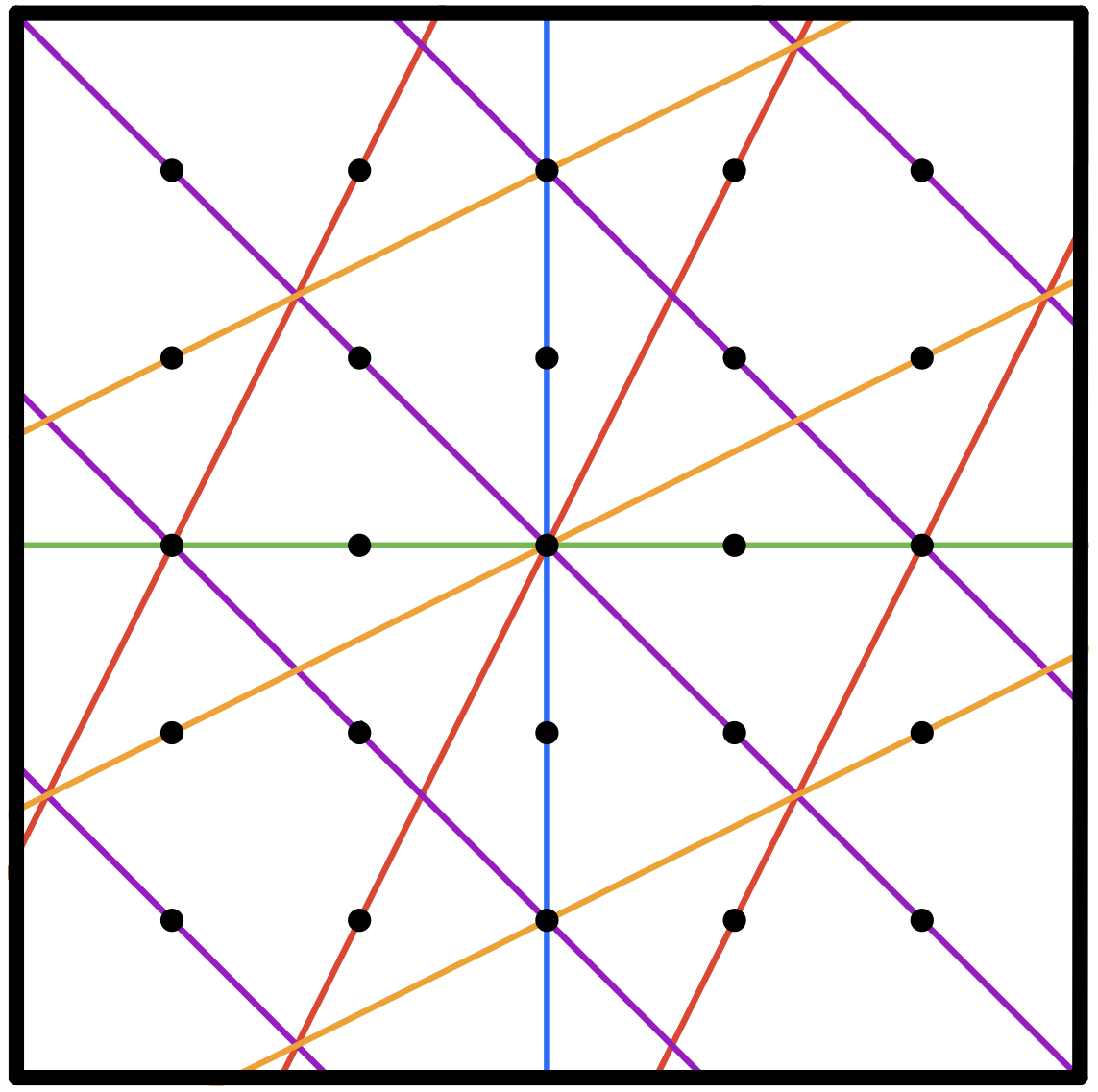}
    \caption{}
    \label{fig:65 intro}
\end{figure}

\subsection{Cyclotomic factors of shifted cyclotomic polynomials}
Cyclotomic factors of necklace polynomials are also closely related to cyclotomic factors of shifted cyclotomic polynomials $\Phi_d(x) - 1$. For example, if $d = 105$, then
\[
    \Phi_{105}(x) - 1 = \wt{e}(x) \cdot \Phi_8 \cdot \Phi_6 \cdot \Phi_4 \cdot \Phi_3 \cdot \Phi_2 \cdot \Phi_1\cdot x,
\]
where $\wt{e}(x) \in \QQ[x]$ is a degree 35, irreducible, non-cyclotomic polynomial. Note that the cyclotomic factors dividing $\Phi_{105}(x) - 1$ are precisely the same as those dividing $M_{105}(x)$. 
In general, $M_d(x)$ and $\Phi_d(x) - 1$ have most, but not all, cyclotomic factors in common.
See \cite{hyde_cyclo} for a detailed analysis of the cyclotomic factors in these two sequences.

Cyclotomic factors of $\Phi_d(x) - 1$ are also detected by cyclotomic factors of $M_d(x)$ and have an interpretation in terms of multiplicative relations between \emph{cyclotomic units} analogous to the situation with dynamical units discussed above. Thus the cyclotomic factors of necklace polynomials give explicit structural parallels between these two analogous families of units.

\subsection{Necklace operators}
Let $\ZZ\Psi$ denote the ring generated by formal expressions $[m]$ with $m \in \NN$ subject only to the multiplicative relations $[m][n] = [mn]$. The \emph{$d$th necklace operator} $\varphi_d \in \ZZ\Psi$ is defined by
\[
    \varphi_d := \sum_{e\mid d}\mu(e)[d/e].
\]
The cyclotomic factors of $M_d(x)$, cyclotomic factors of $\Phi_d(x) - 1$, and dynatomic factors of $\Phi_{f,d}(x) - 1$ ultimately trace back to the necklace operator $\varphi_d$. 
The polynomials $M_d(x)$, $\Phi_d(x)$ and $\Phi_{f,d}(x)$ may be expressed as images of $\varphi_d$ with respect to different $\ZZ\Psi$-module structures. Suppressing the details of the module structures for now, we have
\begin{align}
\label{eqn dynatomic intuitive}
    M_d(x) &= \frac{1}{d} \sum_{e\mid d}\mu(e)x^{d/e} = \varphi_d (x/d),\nonumber\\
    \Phi_d(x) &= \prod_{e\mid d}(x^{d/e} - 1)^{\mu(e)} = (x - 1)^{\varphi_d},\nonumber\\
    \Phi_{f,d}(x) &= \prod_{e\mid d}(f^{d/e}(x) - x)^{\mu(e)} = (f(x) - x)^{\varphi_d}.
\end{align}

As the notation suggests, $M_d(x)$ is an image in an additive $\ZZ\Psi$-module while $\Phi_d(x)$ and $\Phi_{f,d}(x)$ arise from multiplicative $\ZZ\Psi$-modules. Much of the work that goes into proving Theorem \ref{thm intro main} involves constructing the appropriate $\ZZ\Psi$-module in which to realize the above expression of $\Phi_{f,d}(x)$ as an image of $\varphi_d$.

All of the cyclotomic and dynatomic factors of the polynomials discussed above, as well as the connection to hyperplane arrangements in the group of Dirichlet characters, ultimately traces back to the following factorization of the necklace operator (in a localization of $\ZZ\Psi$):
\[
    \varphi_d = [d] \prod_{p \mid d}\Big(1 - \frac{1}{[p]}\Big),
\]
where the product is taken over all primes $p$ dividing $d$.

\subsection{Acknowledgements}
We are happy to thank Patrick Morton and Joe Silverman for feedback on an earlier draft. 
John Doyle was partially supported by NSF grant DMS-2112697.
Trevor Hyde was partially supported by the NSF Postdoctoral Research Fellowship DMS-2002176 and the Jump Trading Mathlab Research Fund.

\section{Preliminary results}

In this section we prove preliminary results leading up to the proofs of Theorem \ref{thm intro main} and Theorem \ref{thm intro hyperplane} in Section \ref{sec results}. Our main goal is to make sense of \eqref{eqn dynatomic intuitive}. We accomplish this by introducing the notions of composition rings and their algebras.
In Section \ref{sec generically sqfree}, we prove a statement on the generic separability of (generalized) dynatomic polynomials; this result is folklore in the arithmetic dynamics community but we were unable to find a suitable reference.

\subsection{Composition rings}
Suppose $R$ is a commutative ring and $S$ is a monoid of ring endomorphisms of $R$ with respect to composition. The monoid $S$ generates a (non-unital) subring $C_S$ of the ring of all $R$-valued functions on $R$, with pointwise ring operations. Furthermore, $C_S$ has an extra layer of structure coming from the composition operation on $S$. We abstract this situation into the notion of a composition ring.

\begin{defn}
\label{def comp ring}
A \emph{composition ring} $C$ is a (potentially non-unital) commutative ring together with an associative operation $\circ$ such that for all $f, g, h \in C$
\begin{enumerate}
    \item $(f + g)\circ h = (f \circ h) + (g \circ h)$,
    \item $(f \cdot g) \circ h = (f\circ h)\cdot (g\circ h)$, and
    \item there exists a two-sided compositional identity $x \in C$.
\end{enumerate}
A morphism $\sigma: C \rightarrow D$ of composition rings is a ring homomorphism which respects the composition operator and preserves compositional identities.
\end{defn}

All of the composition rings we consider are constructed as follows.

\begin{defn}
\label{def free comp}
Let $S$ be a multiplicative monoid. The \emph{free $S$-composition ring} $\ZZ\{S\}$ is the composition ring generated by expressions $[s]$ with $s \in S$ where the composition operation $\circ$ is determined by the following relations: for all $f, g \in \ZZ\{S\}$ and $s, t \in S$
\begin{enumerate}
    \item[(\textit{i})] $[s] \circ (f + g) = ([s] \circ f) + ([s] \circ g)$,
    \item[(\textit{ii})] $[s] \circ (f\cdot g) = ([s] \circ f)\cdot ([s]\circ g)$, and
    \item[(\textit{iii})] $[s] \circ [t] = [st]$.
\end{enumerate}
Note that the compositional identity is $x := [1]$ where $1 \in S$ is the multiplicative identity.
\end{defn}

To see that the composition operation on $\ZZ\{S\}$ is determined by these properties, first observe that Definition \ref{def comp ring} (1) and (2) reduce the computation of $f \circ g$ for $f, g \in \ZZ\{S\}$ to $[s] \circ g$ with $s \in S$. Then Definition \ref{def free comp} (\textit{i}) and (\textit{ii}) reduce us further to $[s] \circ [t]$ for $s, t \in S$, and finally (\textit{iii}) tells us that $[s] \circ [t] = [st]$. This reduction is illustrated in the following example.

\begin{example}
Let $S := \langle f, g\rangle$ be the free monoid on two generators. Consider the elements
\begin{align*}
    \alpha &:= 3[f^2][f] + 2[1][g]\\
    \beta &:= [f][g] + [fg]
\end{align*}
of $\ZZ\{S\}$. Then by Definition \ref{def comp ring} (1) and (2),
\begin{align*}
    \alpha \circ \beta &= (3[f^2][f] + 2[1][g])\circ \beta\\
    &= 3([f^2]\circ \beta)([f]\circ \beta) + 2([1]\circ \beta)([g]\circ \beta).
\end{align*}
Definition \ref{def free comp} (\textit{i}) and (\textit{ii}) imply that
\begin{align*}
    [f^2]\circ \beta &= [f^3][f^2g] + [f^3g],\\
    [f]\circ \beta   &= [f^2][fg] + [f^2g],\\
    [1]\circ \beta   &= [f][g] + [fg],\\
    [g]\circ \beta   &= [gf][g^2] + [gfg].
\end{align*}
Thus,
\[
    \alpha \circ \beta = 3\left([f^3][f^2g] + [f^3g]\right)\left([f^2][fg] + [f^2g]\right) + 2\left([f][g] + [fg]\right)\left([gf][g^2] + [gfg]\right).
\]
\end{example}

\begin{remark}
The composition ring $\ZZ\{S\}$ is closely related to the more familiar monoid ring $\ZZ[S]$. The latter is the ring generated by $[s]$ for $s \in S$ with multiplication determined by $[s]\cdot [t] = [st]$. The monoid ring $\ZZ[S]$ embeds into $\ZZ\{S\}$ as linear combinations of the ``degree one'' elements with product structure given by $\circ$.
\end{remark}

If $\wt{\sigma}: S \rightarrow T$ is a monoid homomorphism, then there is a unique composition ring homomorphism $\sigma: \ZZ\{S\} \rightarrow \ZZ\{T\}$ which lifts $\wt{\sigma}$. In fact, the map $S \mapsto \ZZ\{S\}$ gives a functor from monoids to composition rings. 

We further restrict our attention to monoids $S$ which are quotients of the free cyclic monoid on one generator $\langle f \rangle$. For each $m, n \in \NN$ with $n\geq 1$, let $\ZZ\{f\} := \ZZ\{\langle f \rangle\}$ and let 
\[
    \ZZ_{m,n}\{f\} := \ZZ\{\langle f : f^{m+n} = f^m\rangle\}.
\]
The monoid quotient
\[
    \langle f\rangle \rightarrow \langle f : f^{m+n} = f^m\rangle.
\]
induces, by functoriality, a map of composition rings $\ZZ\{f\} \rightarrow \ZZ_{m,n}\{f\}$. If $\alpha, \beta \in \ZZ\{f\}$ are elements with the same image in $\ZZ_{m,n}\{f\}$, then we write
\[
    \alpha \equiv \beta \bmod \ZZ_{m,n}\{f\}.
\]

\subsection{$\Psi$-module structure on $\ZZ\{f\}$}

Let $\NN^\circ$ denote the multiplicative monoid of natural numbers and let $\Psi := \NN[\NN^\circ]$ denote the monoid semiring of $\NN^\circ$.
That is, $\Psi$ is the semiring additively spanned by formal expressions $[m]$ for $m \in \NN$ such that for $m, n \in \NN$,
\[
    [m][n] = [mn].
\]

For each $m \in \NN$ there is a unique endomorphism $[m]$ of the cyclic semigroup $\langle f \rangle$ expressed in exponential notation as $f^{[m]} := f^m$. This gives, by functoriality, an endomorphism $[m] : \ZZ\{f\} \rightarrow \ZZ\{f\}$ of composition rings. We extend this action to a multiplicative $\Psi$-module structure on $\ZZ\{f\}$. 
\begin{example}
If $\psi = 3[5] + 2[4] \in \Psi$, then
\[
    ([f] - [1])^\psi = ([f] - [1])^{3[5] + 2[4]} := ([f^5] - [1])^3([f^4] - [1])^2.
\]
\end{example}

If $m\geq 0$ and $n \geq 1$ are natural numbers, the semiring quotient $\NN \rightarrow \NN/(m + n = m)$ induces a quotient on multiplicative monoids $\NN^\circ \rightarrow (\NN/(m + n = n))^\circ$. Let $\Psi_{m,n}$ denote the semiring quotient of $\Psi$ induced by this quotient of monoids. If $\psi_1, \psi_2 \in \Psi$ are two elements with the same image under this map, then we write
\[
    \psi_1 \equiv \psi_2 \brmod{m + n = m},
\]
or simply
\[
    \psi_1 \equiv \psi_2 \brmod{n},
\]
when $m = 0$. This notation is meant to suggest that the quotient takes place inside the brackets.

\begin{example}
If $m = 0$ and $n = 3$, then
\[
    5[1] - 3[2] + 4[5] \equiv 5[1] + [2] \not\equiv 2[1] + [2] \brmod{3}.
\]
The first congruence holds because $[2] \equiv [5] \brmod{3}$. The second congruence does not hold because the congruence does not extend to the coefficients so that $2[1] \not\equiv 5[1] \brmod{3}$.
\end{example}

The action of $\NN^\circ$ on the cyclic monoid $\langle f : f^{m+n} = f^m\rangle$ factors through the quotient $\NN^\circ/(m + n = m)$, hence the multiplicative $\Psi$-module structure on $\ZZ_{m,n}\{f\}$ factors through $\Psi_{m,n}$. Lemma \ref{lem compatible quotients} formally states this observation.

\begin{lemma}
\label{lem compatible quotients}
If $\alpha \in \ZZ\{f\}$ and $\psi_1, \psi_2 \in \Psi$ are such that $\psi_1 \equiv \psi_2 \brmod{m + n = m}$, then $\alpha^{\psi_1} \equiv \alpha^{\psi_2} \bmod \ZZ_{m,n}\{f\}$.
\end{lemma}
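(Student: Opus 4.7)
The plan is to reduce the lemma to a descent statement for the individual endomorphisms $[k]\colon \ZZ\{f\} \to \ZZ\{f\}$, using the fact that the multiplicative $\Psi$-module structure on $\ZZ\{f\}$ is assembled from these endomorphisms. Namely, if we write $\psi = \sum_i c_i[k_i] \in \Psi$ with $c_i \in \NN$, then the module axioms force $\alpha^\psi = \prod_i (\alpha^{[k_i]})^{c_i}$, so it is enough to prove first that the image of $\alpha^{[k]}$ in $\ZZ_{m,n}\{f\}$ depends only on the class of $k$ in $\NN^\circ/(m+n=m)$, and then that this suffices to imply the stated congruence. The second point is immediate from the first by collecting coefficients on each equivalence class $E$ of $\NN^\circ/(m+n=m)$: if $\psi_1$ and $\psi_2$ map to the same element of $\Psi_{m,n}$, then $\sum_{k \in E}c_{1,k} = \sum_{k\in E}c_{2,k}$ for every class $E$, and the first point allows us to collapse each partial product $\prod_{k\in E}(\alpha^{[k]})^{c_{i,k}}$ modulo $\ZZ_{m,n}\{f\}$ to a single factor $(\alpha^{[k_E]})^{\sum_{k \in E} c_{i,k}}$ with $k_E$ any fixed representative of $E$, and independence of $i$ follows.

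For the first point I would invoke the functoriality of $\ZZ\{\cdot\}$. The endomorphism $[k]$ is obtained by applying $\ZZ\{\cdot\}$ to the monoid homomorphism $e_k\colon \langle f\rangle \to \langle f\rangle$ given by $f^j \mapsto f^{jk}$, so composing with the quotient to $\ZZ_{m,n}\{f\}$ is the functor applied to the composite $\langle f\rangle \xrightarrow{e_k} \langle f\rangle \to \langle f : f^{m+n}=f^m\rangle$. It therefore suffices to show that this composed monoid map depends only on the class of $k$ modulo $(m+n=m)$. That equivalence relation identifies $k_1$ and $k_2$ precisely when either $k_1 = k_2$ or both are $\geq m$ and congruent modulo $n$, so only the latter case requires attention; for every $j \geq 0$ one must verify that $f^{jk_1}$ and $f^{jk_2}$ have the same image in $\langle f : f^{m+n}=f^m\rangle$. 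When $j=0$ both reduce to $f^0$, and when $j \geq 1$ we have $jk_1, jk_2 \geq m$ while $jk_1 - jk_2 = j(k_1-k_2)$ is a multiple of $n$, so the two exponents lie in the same residue class above the threshold $m$ and are identified in the quotient monoid.

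The only real obstacle is bookkeeping around two parallel quotient constructions, one additive on $\NN$ and one multiplicative on $\NN^\circ$; once the short monoid-level congruence above is in hand, the lemma follows by chasing the multiplicative module axioms through the functoriality of $\ZZ\{\cdot\}$, with no additional content.
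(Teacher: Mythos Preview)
Your proposal is correct and follows the same idea as the paper. The paper does not give a separate proof of this lemma; it simply observes in the paragraph preceding the statement that the $\NN^\circ$-action on the cyclic monoid $\langle f : f^{m+n}=f^m\rangle$ factors through $\NN^\circ/(m+n=m)$, and declares the lemma to be a formal restatement of this fact. Your argument is a careful unpacking of exactly that observation: you verify the monoid-level factoring by checking that $jk_1$ and $jk_2$ are identified in $\langle f : f^{m+n}=f^m\rangle$ whenever $k_1$ and $k_2$ are identified in $\NN^\circ/(m+n=m)$, and then pass to the composition ring via functoriality and the multiplicative module axioms.
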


\subsection{Necklace operators}

If $R$ is a semiring, then let $R\Psi := R \otimes_\NN \Psi$ denote the extension of scalars of $\Psi$ from $\NN$ to $R$.

\begin{defn}
If $d\geq 1$ is a natural number, then the \emph{$d$th necklace operator} $\varphi_d$ is
\[
    \varphi_d := \sum_{e\mid d} \mu(e) [d/e] \in \ZZ\Psi,
\]
where $\mu$ is the usual number theoretic M\"obius function.
\end{defn}

There is a unique cancellation-free way to write the $d$th necklace operator as a difference $\varphi_d = \varphi_d^+ - \varphi_d^-$ of elements $\varphi_d^\pm \in \Psi$. Now let $\Phi_{f,d}^\pm \in \ZZ\{f\}$ be defined by
\[
    \Phi_{f,d}^\pm = ([f] - [1])^{\varphi_d^\pm}.
\]

Note that $\Psi$ and $\Psi_{m,n}$ have no additive torsion, hence embed into $\QQ\Psi$ and $\QQ\Psi_{m,n}$, respectively. Lemma \ref{lem free module} constructs a simple polynomial model of the free $\QQ\Psi_{m,n}$-module which allows us to relate the vanishing of $\varphi_d$ in $\ZZ\Psi_{m,n}$ to cyclotomic factors of $M_d(x)$. The polynomial ring $\QQ[x]$ carries a natural $\QQ\Psi$-module structure determined by $[k]g(x) := g(x^k)$ for $g(x) \in \QQ[x]$. Here $x^k$ denotes a monomial and not the $k$th compositional power of the identity function (which would again be the identity.)

\begin{lemma}
\label{lem free module}
Let $m\geq 0$ and $n\geq 1$. The $\QQ\Psi$-module structure on $\QQ[x]$ defined by $[k]g(x) := g(x^k)$ descends to $\QQ[x]/(x^{m+n} - x^m)$ and factors through $\QQ\Psi_{m,n}$. Furthermore, 
\[
    \QQ[x]/(x^{m+n} - x^m) \cong \QQ\Psi_{m,n}
\]
as $\QQ\Psi_{m,n}$-modules.
\end{lemma}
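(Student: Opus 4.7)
The plan is to handle the three assertions—that the $\QQ\Psi$-action descends to $\QQ[x]/(x^{m+n}-x^m)$, that it factors through $\QQ\Psi_{m,n}$, and that the quotient is $\QQ\Psi_{m,n}$-isomorphic to $\QQ\Psi_{m,n}$—one at a time. Each reduces to a short polynomial computation; the only real content lies in correctly identifying the equivalence classes cut out by the semiring quotient $\Psi \to \Psi_{m,n}$.

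For descent, I would check that the ideal $(x^{m+n}-x^m) \subset \QQ[x]$ is preserved under each operator $[k]$. Since $[k]$ acts as the ring endomorphism $g(x) \mapsto g(x^k)$ (and $[0]$ collapses the ideal, since it factors through evaluation at $1$), it suffices to handle $k \geq 1$, for which
\[
[k](x^{m+n}-x^m) = x^{k(m+n)} - x^{km} = x^{km}(x^{kn}-1)
\]
is visibly a multiple of $x^m(x^n-1) = x^{m+n}-x^m$. For the factorization through $\QQ\Psi_{m,n}$, I would first record that the congruence on $\NN$ generated by $m+n \equiv m$ has exactly $m+n$ equivalence classes: the singletons $\{0\}, \{1\}, \ldots, \{m-1\}$ together with the arithmetic progressions $\{k, k+n, k+2n, \ldots\}$ for $m \leq k < m+n$. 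By $\QQ$-linearity one reduces to verifying, for monomials $g(x) = x^\ell$, that $[k_1] x^\ell$ and $[k_2] x^\ell$ agree in the quotient whenever $k_1, k_2$ lie in a common class. The $\ell = 0$ case is immediate, and for $\ell \geq 1$ the nontrivial case is $k_1, k_2 \geq m$ with $k_1 \equiv k_2 \pmod n$, where the identity
\[
x^{\ell k_1} - x^{\ell k_2} = x^{\ell \min(k_1,k_2)}\bigl(x^{\ell|k_1-k_2|} - 1\bigr)
\]
exhibits this difference as a multiple of $x^m(x^n-1)$: indeed $\ell \min(k_1, k_2) \geq m$, and $n$ divides $\ell|k_1-k_2|$, so $x^m$ and $x^n-1$ each divide the corresponding factor on the right.

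Finally, I would define the $\QQ$-linear map
\[
\phi: \QQ\Psi_{m,n} \longrightarrow \QQ[x]/(x^{m+n}-x^m), \qquad [k] \longmapsto x^k,
\]
whose well-definedness is the $\ell = 1$ specialization of the previous step. The map is $\QQ\Psi_{m,n}$-linear because $\phi([j][k]) = \phi([jk]) = x^{jk} = [j] \cdot x^k = [j] \cdot \phi([k])$, and it sends the $\QQ$-basis $\{[0], [1], \ldots, [m+n-1]\}$ of $\QQ\Psi_{m,n}$ bijectively to the $\QQ$-basis $\{1, x, \ldots, x^{m+n-1}\}$ of $\QQ[x]/(x^{m+n}-x^m)$. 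Hence $\phi$ is a $\QQ$-linear bijection, and combining this with the $\QQ\Psi_{m,n}$-linearity yields the desired module isomorphism. No step poses a genuine obstacle; the crucial bookkeeping point is the explicit description of the equivalence classes in the quotient, after which the verifications are all one-line computations.
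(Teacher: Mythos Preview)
Your proof is correct and follows essentially the same route as the paper's: the descent check is identical, and your explicit basis bijection $[k]\mapsto x^k$ is exactly the map the paper obtains by noting that $M_{m,n}$ is cyclic with generator $x$ and then comparing $\QQ$-dimensions. The only difference is that you unpack the factorization step (which the paper dismisses with ``clearly'') by describing the congruence classes on $\NN$ explicitly, which is a reasonable level of detail to include.
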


\begin{proof}
Let $M_{m,n} := \QQ[x]/(x^{m+n} - x^m)$. To see that the $\QQ\Psi$-module structure on $\QQ[x]$ descends to $M_{m,n}$ it suffices to check that if $f(x) \equiv g(x) \bmod (x^{m+n} - x^m)$, then $f(x^k) \equiv g(x^k) \bmod (x^{m+n} - x^m)$. This follows from the observation that $x^{mk}(x^{nk} - 1)$ is divisible by $x^m(x^n - 1)$ for all $k \in \NN$. The $\QQ\Psi$-action on $M_{m,n}$ clearly factors through $\QQ\Psi_{m,n}$. Observe that $M_{m,n}$ is cyclic as a $\QQ\Psi_{m,n}$-module and is generated by $x$. Note that both $M_{m,n}$ and $\QQ\Psi_{m,n}$ have dimension $m + n$ over $\QQ$, hence $M_{m,n}$ is free.
\end{proof}

\begin{defn}
The \emph{core} of a positive integer $d$ is the largest squarefree factor $d'$ of $d$ and the \emph{cocore} of $d$ is $d/d'$. Note that the core of $d$ is the product of all distinct primes dividing $d$.
\end{defn}

\begin{defn}
The \emph{$d$th necklace polynomial} $M_d(x) \in \QQ[x]$ for $d\geq 1$ is defined by
\[
    M_d(x) := \frac{1}{d}\sum_{e\mid d}\mu(e)x^{d/e}.
\]
\end{defn}

\begin{prop}
\label{prop necklace vanishing}
Let $m, n, d \in \NN$ be such that $n, d \geq 1$. If
\begin{enumerate}
    \item the cocore of $d$ is at least $m$, and
    \item $x^n - 1$ divides $M_d(x)$ in $\QQ[x]$,
\end{enumerate}
then $\varphi_d = 0$ in $\ZZ\Psi_{m,n}$ and 
\[
    \Phi_{f,d}^+ \equiv \Phi_{f,d}^- \bmod \ZZ_{m,n}\{f\}.
\]
\end{prop}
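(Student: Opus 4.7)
The plan is to use Lemma \ref{lem free module} to translate the vanishing of $\varphi_d$ in $\ZZ\Psi_{m,n}$ into a concrete divisibility statement about $M_d(x)$ in $\QQ[x]$. Since $\Psi$ and $\Psi_{m,n}$ are free $\NN$-semimodules on the sets $\NN$ and $\{[0], [1], \ldots, [m+n-1]\}$ respectively, there are embeddings $\ZZ\Psi \hookrightarrow \QQ\Psi$ and $\ZZ\Psi_{m,n} \hookrightarrow \QQ\Psi_{m,n}$. It therefore suffices to prove that $\varphi_d = 0$ in $\QQ\Psi_{m,n}$.

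The crux is the following calculation inside the free $\QQ\Psi_{m,n}$-module $\QQ[x]/(x^{m+n} - x^m)$ of Lemma \ref{lem free module}. Acting on the cyclic generator $x$ we get
\[
    \varphi_d \cdot x \;=\; \sum_{e \mid d} \mu(e)\, x^{d/e} \;=\; d\cdot M_d(x).
\]
Since the module is free on the generator $x$, an element $\psi \in \QQ\Psi_{m,n}$ vanishes iff $\psi \cdot x = 0$ in $\QQ[x]/(x^{m+n} - x^m)$. As $d$ is invertible in $\QQ$ and $x^{m+n} - x^m = x^m(x^n - 1)$, the vanishing of $\varphi_d$ in $\QQ\Psi_{m,n}$ is thus equivalent to the single polynomial divisibility $x^m(x^n - 1) \mid M_d(x)$ in $\QQ[x]$.

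I then verify this divisibility from the hypotheses. Condition (2) provides the factor $x^n - 1$ directly. For the factor $x^m$, note that the monomials appearing in $M_d(x)$ with nonzero coefficient are precisely $x^{d/e}$ for $e$ a squarefree divisor of $d$, so the smallest exponent occurring is $d/d'$, where $d'$ is the core of $d$. This smallest exponent is by definition the cocore of $d$, which is at least $m$ by (1); hence $x^m \mid M_d(x)$. Because $\gcd(x^m, x^n - 1) = 1$ in $\QQ[x]$, the two divisibilities combine to give $x^m(x^n - 1) \mid M_d(x)$, so $\varphi_d = 0$ in $\ZZ\Psi_{m,n}$.

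To conclude, writing $\varphi_d = \varphi_d^+ - \varphi_d^-$ with $\varphi_d^\pm \in \Psi$, the vanishing just established is precisely the congruence $\varphi_d^+ \equiv \varphi_d^- \brmod{m+n=m}$ inside $\Psi$. Applying Lemma \ref{lem compatible quotients} with $\alpha = [f] - [1]$ then yields $\Phi_{f,d}^+ \equiv \Phi_{f,d}^- \bmod \ZZ_{m,n}\{f\}$. No individual step is a substantial obstacle; the real content is the identification $M_d(x) \leftrightarrow \varphi_d \cdot x$ supplied by Lemma \ref{lem free module}, which cleanly converts a semiring identity in $\ZZ\Psi_{m,n}$ into the concrete polynomial divisibility encoded by conditions (1) and (2).
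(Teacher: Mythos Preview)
Your proof is correct and follows essentially the same approach as the paper's: use Lemma~\ref{lem free module} to identify $\varphi_d \cdot x$ with $dM_d(x)$ in $\QQ[x]/(x^m(x^n-1))$, verify the divisibility $x^m(x^n-1)\mid M_d(x)$ from conditions (1) and (2), and then apply Lemma~\ref{lem compatible quotients} to obtain the congruence of $\Phi_{f,d}^\pm$. The only differences are cosmetic---you make the embedding $\ZZ\Psi_{m,n}\hookrightarrow\QQ\Psi_{m,n}$ and the coprimality of $x^m$ and $x^n-1$ explicit, whereas the paper leaves these implicit.
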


\begin{proof}
Lemma \ref{lem free module} implies that $\QQ[x]/(x^{m+n} - x^m)$ is a free $\QQ\Psi_{m,n}$-module generated by $x$. Hence $\varphi_d = 0$ in $\ZZ\Psi_{m,n}$ if and only if $\varphi_d x = 0$ in $\QQ[x]/(x^m(x^n - 1))$. Since
\[
    \varphi_d x = \sum_{e\mid d}\mu(e)[d/e]x = \sum_{e\mid d}\mu(e)x^{d/e} = dM_d(x),
\]
$\varphi_d = 0$ in $\ZZ\Psi_{m,n}$ if and only if $x^m$ and $x^n - 1$ both divide $M_d(x)$. Since $\mu(e) = 0$ when $e$ is not squarefree, the exponent of the largest power of $x$ dividing $M_d(x)$ is the cocore of $d$. Therefore (1) and (2) imply that $\varphi_d = 0$ in $\ZZ\Psi_{m,n}$.

If $\varphi_d = 0$ in $\ZZ\Psi_{m,n}$, then $\varphi_d^+ \equiv \varphi_d^- \brmod{m +n = m}$ and, by Lemma \ref{lem compatible quotients},
\[
    \Phi_{f,d}^+ = ([f] - [1])^{\varphi_d^+} \equiv ([f] - [1])^{\varphi_d^-} = \Phi_{f,d}^- \bmod \ZZ_{m,n}\{f\}.\qedhere
\]
\end{proof}

\subsection{Dynatomic polynomials are generically squarefree}
\label{sec generically sqfree}
We step aside from the theory developed in the previous sections to prove a dynamical lemma.

\begin{lemma}
\label{lem dynatomic squarefree}
Let $K$ be a field and let $f(x)$ be the generic degree $k\geq 2$ polynomial over $K$,
\[
    f(x) = a_kx^k + a_{k-1}x^{k-1} + \ldots + a_1x + a_0 \in K(a_0, a_1, \ldots, a_k)[x].
\]
Then for any $m, n \in \NN$ such that $n \geq 1$, $f^{m+n}(x) - f^m(x)$ has non-vanishing discriminant.
\end{lemma}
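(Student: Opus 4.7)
The plan is to reduce the statement to finding a single concrete polynomial $f_0(x) \in \overline K[x]$ of degree $k$ whose iterate-difference $f_0^{m+n}(x) - f_0^m(x)$ is separable; since the discriminant $\mathrm{Disc}_x(f^{m+n}(x) - f^m(x))$ is a polynomial in the formal coefficients $a_0, \ldots, a_k$ over $\ZZ$, exhibiting one specialization at which this discriminant does not vanish shows that it is nonzero as an element of $K(a_0, \ldots, a_k)$. To analyze when $f_0^{m+n}(x) - f_0^m(x)$ has a double root, I would compute its derivative by the chain rule as
\[
\prod_{j=0}^{m-1} f_0'(f_0^j(\alpha)) \cdot \Bigl(\prod_{j=m}^{m+n-1} f_0'(f_0^j(\alpha)) - 1\Bigr),
\]
which vanishes at a root $\alpha$ if and only if either some $f_0^j(\alpha)$ with $0 \leq j < m$ is a critical point of $f_0$, or the multiplier of the cycle containing $f_0^m(\alpha)$ is an $(n/d)$-th root of unity, where $d \mid n$ is the period of this cycle. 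The task then becomes to exhibit $f_0$ for which neither obstruction occurs.

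I would handle the two obstructions by splitting into cases according to whether $\mathrm{char}(K)$ divides $k$. When $\mathrm{char}(K) = p$ divides $k$, take $f_0(x) = x^k + t x$ for some $t \in \overline K$ with $t \neq 0$ and $t^n \neq 1$; such $t$ exists because $\overline K$ is infinite. In this case the leading term of $f_0'(x) = k x^{k-1} + t$ vanishes modulo $p$, so $f_0'(x) = t$ is a nonzero constant, and the chain rule gives $(f_0^j)'(x) = t^j$ for all $j \geq 0$. The derivative of $f_0^{m+n}(x) - f_0^m(x)$ is therefore the nonzero constant $t^m(t^n - 1)$, which is automatically coprime to the polynomial itself, so separability follows at once.

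When $\mathrm{char}(K) = 0$ or $\mathrm{char}(K) = p$ with $p \nmid k$, I would instead take $f_0(x) = x^k + c$ with $c$ transcendental over the prime field. The unique critical point is then $0$, and I expect to show by a degree-in-$c$ computation that the iterates $f_0^j(0)$ are polynomials in $c$ of strictly increasing degrees $k^{j-1}$; in particular they are pairwise distinct, so $0$ is not preperiodic and its forward orbit is disjoint from every cycle, dispatching the first obstruction. For the second obstruction I would need to show that the multiplier $\lambda(c)$ of any periodic cycle of period $d \mid n$ is a non-constant function of $c$, so that the identity $\lambda(c)^{n/d} = 1$ cannot hold in $\overline{K(c)}$.

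The main obstacle is verifying this multiplier condition. A self-contained argument plausibly proceeds by tracking the Puiseux expansions of the periodic points as $c \to 0$ or $c \to \infty$, where the cycles degenerate to orbits of $x^k$ whose multipliers are explicit and visibly non-constant in the deformation parameter; alternatively one could invoke the algebro-geometric fact that the multiplier spectrum varies non-trivially across the one-parameter family $\{x^k + c\}$. Once the non-constancy of the multiplier is in hand, the conclusion is immediate, and I would expect most of the real work of the lemma to lie in this step.
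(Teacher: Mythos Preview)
Your approach is essentially the same as the paper's: reduce to a specialization, split cases on whether $\mathrm{char}(K)$ divides $k$, and use $f_0(x) = x^k + tx$ in the $p \mid k$ case (with the same derivative-is-a-nonzero-constant argument) and $f_0(x) = x^k + c$ in the $p \nmid k$ case. The only difference is that where you outline a direct argument for the $p \nmid k$ case---correctly isolating the multiplier non-constancy as the crux---the paper simply cites Morton \cite{morton:1996} and Doyle--Poonen \cite{doyle/poonen} for the separability of $f_t^{m+n}(x) - f_t^m(x)$ over $K(t)$. Your sketch is sound and those references carry out precisely the kind of analysis you describe, so there is no gap beyond the one you already flag.
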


\begin{proof}
It suffices to prove the claim after specializing some subset of the coefficients of $f$. We consider two specializations depending the characteristic $p \geq 0$ of $K$.

First, suppose that $p \nmid k$. Morton \cite[Lemma 2]{morton:1996} shows that for $f_t(x) := x^k + t$, the polynomial $f_t^n(x) - x$ is separable over $K(t)$ for all $n \ge 1$, and, using similar techniques, the same is shown in \cite[Lemma 4.2]{doyle/poonen} for $f_t^{m+n}(x) - f_t^m(x)$ with $m \ge 0$ and $n \ge 1$.

Now suppose that $p \mid k$, and consider the polynomial $f_t(x) := x^k + tx \in K(t)[x]$. Then $f_t'(x) = t$, hence $(f_t^\ell)'(x) = t^\ell$ for all $\ell \ge 1$ by the chain rule. This implies that the polynomial $f_t^{m+n}(x) - f_t^m(x)$ has derivative $t^{m+n} - t^m$, a nonzero constant in $K(t)$. Since its derivative is nowhere vanishing, the polynomial $f_t^{m+n}(x) -f_t^m(x)$ is separable for all $m \ge 0$ and $n \ge 1$.
\end{proof}

\begin{remark}
\mbox{}
\begin{enumerate}
\item In characteristic $0$, Lemma \ref{lem dynatomic squarefree} predates \cite{morton:1996}. Indeed, for $a \in \CC$ and $f_a(x) = x^k + a$, the polynomial $f_a^{m+n}(x) - f_a^m(x)$ has a multiple root if and only if either $f_a$ has fewer than $k^n$ points of period dividing $n$, or $m \ge 1$ and the critical point $0$ is a root of $f_a^{m+n}(x) - f_a^m(x)$. The set of such $a \in \CC$ is contained in the degree-$k$ ``Multibrot set'' $\mathcal M_k$, which is a compact subset of $\CC$, hence one can further specialize $f_t$ to any $a \in \CC \setminus \mathcal M_k$. See also \cite{fakhruddin}---especially \cite[\textsection 3]{fakhruddin}---for related results.
\item In \cite[Lemma 4.2]{doyle/poonen}, which we refer to in the proof of Lemma~\ref{lem dynatomic squarefree}, it was assumed that $K$ is a finite field, since that was the only case for which the result was needed. However, the proof that $f_t^{m+n}(x) - f_t^m(x)$ is separable over $K(t)$ only requires that the characteristic of $K$ does not divide $k$.
\end{enumerate}
\end{remark}

\begin{defn}
If $f(x) \in K[x]$ is a polynomial, then the \emph{$n$th dynatomic polynomial} $\Phi_{f,n}(x) \in K[x]$ for $n\geq 1$ is defined by the product
\[
    \Phi_{f,n}(x) := \prod_{j\mid n}(f^{n/j}(x) - x)^{\mu(j)}.
\]
If $m\geq 0$, then the \emph{$(m,n)$th generalized dynatomic polynomial} $\Phi_{f,m,n}(x)$ is defined by $\Phi_{f,0,n}(x) := \Phi_{f,n}(x)$ and for $m\geq 1$,
\[
    \Phi_{f,m,n}(x) := \frac{\Phi_{f,n}(f^m(x))}{\Phi_{f,n}(f^{m-1}(x))}.
\]
\end{defn}

Despite their appearance, dynatomic polynomials are indeed polynomials and not just rational functions, as was first proven by Morton and Patel \cite{morton patel}.
See Silverman \cite[Sec. 4.1]{ADS} for a general introduction to dynatomic polynomials and \cite[Thm. 4.5]{ADS} for a proof that $\Phi_{f,d}(x)$ is a polynomial (and not just a rational function as is apparent from the defining product). As a special case of Hutz \cite[Thm. 1]{hutz} we get that $\Phi_{f,m,n}(x)$ is a polynomial; we may also deduce this quickly from Lemma \ref{lem dynatomic squarefree}.

The following factorization of $f^{m+n}(x) - f^m(x)$ is well-known and is often used without proof. We prove it here for the reader's convenience.

\begin{lemma}
\label{lem dynamic factor}
Let $f(x) \in K[x]$ be a polynomial of degree at least 2, then
\[
    f^{m+n}(x) - f^m(x) = \prod_{\substack{i \leq m\\ j \mid n}} \Phi_{f,i,j}(x).
\]
\end{lemma}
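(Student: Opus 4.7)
The plan is to bootstrap from the $m = 0$ case by substituting $f^m(x)$ for $x$, and then to telescope the factor coming from each $j \mid n$ using the recursive definition of $\Phi_{f,i,j}$.

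First I would establish the base case $f^n(x) - x = \prod_{j \mid n} \Phi_{f,j}(x)$, which is simply the multiplicative Möbius inversion of the defining formula $\Phi_{f,j}(x) = \prod_{e \mid j}(f^{j/e}(x) - x)^{\mu(e)}$. This inversion is an identity in the multiplicative group of nonzero rational functions in $K(x)$; since each $\Phi_{f,j}$ is already known to be a polynomial (cited as \cite[Thm.~4.5]{ADS}), it promotes to an identity in $K[x]$.

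Substituting $f^m(x)$ for $x$ in the base case gives
\[
    f^{m+n}(x) - f^m(x) = \prod_{j \mid n}\Phi_{f,j}(f^m(x)).
\]
For each fixed $j \mid n$, the recursive definition $\Phi_{f,i,j}(x) = \Phi_{f,j}(f^i(x))/\Phi_{f,j}(f^{i-1}(x))$ for $i \geq 1$, together with the convention $\Phi_{f,0,j}(x) = \Phi_{f,j}(x)$, yields a telescoping product
\[
    \Phi_{f,j}(f^m(x)) \;=\; \Phi_{f,j}(x)\prod_{i=1}^m \frac{\Phi_{f,j}(f^i(x))}{\Phi_{f,j}(f^{i-1}(x))} \;=\; \prod_{i=0}^{m}\Phi_{f,i,j}(x).
\]
Substituting this into the previous display and re-indexing the double product as $\{(i,j) : 0 \leq i \leq m,\ j \mid n\}$ yields the asserted factorization.

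There is no serious obstacle; the proof is essentially a bookkeeping exercise once the base case is in hand. The only mild subtlety is to confirm at each step that the identities live in $K[x]$ rather than only in $K(x)$. This is covered by the polynomiality of $\Phi_{f,i,j}$ noted above (Hutz \cite[Thm.~1]{hutz}), which also follows a posteriori from the telescoping identity itself combined with the generic separability furnished by Lemma~\ref{lem dynatomic squarefree}.
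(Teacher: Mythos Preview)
Your proposal is correct and follows essentially the same approach as the paper: M\"obius inversion for the $m=0$ base case, substitution of $f^m(x)$ for $x$, and then the telescoping product $\Phi_{f,j}(f^m(x)) = \prod_{i\leq m}\Phi_{f,i,j}(x)$ from the recursive definition. Your additional remark about the identities living in $K[x]$ rather than just $K(x)$ is a welcome clarification that the paper leaves implicit.
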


\begin{proof}
Recall that the definition of the dynatomic polynomials is equivalent to
\begin{equation}
\label{eqn dynamic factor}
    f^n(x) - x = \prod_{j\mid n} \Phi_{f,j}(x),
\end{equation}
by M\"obius inversion. Pre-composing both sides with $f^m(x)$ and using the telescoping product identity
\[
    \Phi_{f,j}(f^m(x)) = \frac{\Phi_{f,j}(f^m(x))}{\Phi_{f,j}(f^{m-1}(x))}\frac{\Phi_{f,j}(f^{m-1}(x))}{\Phi_{f,j}(f^{m-2}(x))}\cdots \frac{\Phi_{f,j}(f(x))}{\Phi_{f,j}(x)}\Phi_{f,j}(x)
    = \prod_{i\leq m} \Phi_{f,i,j}(x),
\]
gives us the desired factorization of $f^{m+n}(x) - f^m(x)$.
\end{proof}

Together Lemma \ref{lem dynatomic squarefree} and Lemma \ref{lem dynamic factor} imply that the generic generalized dynatomic polynomial $\Phi_{f,m,n}(x)$ is also squarefree.

\subsection{Composition algebras}
Next we introduce the notion of an algebra for a composition ring.

\begin{defn}
Let $C$ be a composition ring. A \emph{$C$-composition algebra} is a commutative ring $R$ together with an operation $\circ: R \times C \rightarrow R$ such that for all $r \in R$ and $g, h \in C$ we have
\begin{enumerate}
    \item $r \circ (g \circ h) = (r \circ g) \circ h$,
    \item $r\circ (g + h) = (r \circ g) + (r \circ h)$,
    \item $r\circ (g\cdot h) = (r\circ g)\cdot (r\circ h)$, and
    \item $r \circ x = r$,
\end{enumerate}
where $x$ is the compositional identity in $C$.
\end{defn}

Suppose that a monoid $S$ acts (on the right) by ring endomorphisms on a commutative ring $R$. If $r \in R$ and $s \in S$, then we denote this action by $r^s$. By construction there is a unique way to extend this action to a $\ZZ\{S\}$-composition algebra structure on $R$ so that
\[
    r \circ [s] = r^s
\]
for all $r \in R$ and $s \in S$.

Let $K$ be a field. The polynomial ring $K[x]$ is the free $K$-algebra on one generator. This implies that for any element $f$ in a $K$-algebra $R$, there is a unique map of $K$-algebras $\sigma_f: K[x] \rightarrow R$ such that $\sigma_f(x) = f$. In particular, for each polynomial $f(x) \in K[x]$ there is a $K$-algebra endomorphism $\sigma_f: K[x] \rightarrow K[x]$ such that $g(x)^{\sigma_f} := g(f(x))$ for all $g(x) \in K[x]$. Thus $K[x]$ carries a $K\{f\}$-composition algebra structure where $K\{f\} := K\otimes \ZZ\{f\}$ and $g(x) \circ f := g(f(x))$.

\begin{example}
We demonstrate these notions with a simple explicit example: If $g(x) \in K[x]$, then
\[
    g(x) \circ ([f^5] - [1])([f^3] - [1]) = (g(f^5(x)) - g(x))(g(f^3(x)) - g(x)).
\]
\end{example}

\begin{defn}
A polynomial $q(x) \in K[x]$ is \emph{$f$-stable} for $f(x) \in K[x]$ if $q(x)$ divides $q(f(x))$.
\end{defn}

If $q(x)$ is $f$-stable, then the endomorphism $\sigma_f: K[x] \rightarrow K[x]$ descends to an endomorphism of the quotient $K[x]/(q(x))$. Note that if $q(x)$ is squarefree, then $q(x)$ divides $q(f(x))$ if and only if $f$ maps the roots of $q(x)$ into themselves. More generally, let $v_\alpha(q(x))$ denote the valuation of $q(x)$ at $x - \alpha$, then $q(x)$ is $f$-stable if and only if $v_{f(\alpha)}(q(x)) \geq v_\alpha(q(x))$ for all roots $\alpha$ of $q$.

\begin{lemma}
\label{lem f stable}
Let $f(x) \in K[x]$ be a polynomial and let $m, n \in \NN$ such that $n \geq 1$. Then $f^{m+n}(x) - f^m(x)$ is $f$-stable
\end{lemma}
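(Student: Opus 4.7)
The plan is to unwind the definition of $f$-stability and reduce to a standard polynomial divisibility fact. By definition, I need to show that $q(x) := f^{m+n}(x) - f^m(x)$ divides $q(f(x))$. Computing directly,
\[
    q(f(x)) = f^{m+n}(f(x)) - f^m(f(x)) = f^{m+n+1}(x) - f^{m+1}(x) = f\bigl(f^{m+n}(x)\bigr) - f\bigl(f^m(x)\bigr),
\]
so the claim becomes: $f^{m+n}(x) - f^m(x)$ divides $f\bigl(f^{m+n}(x)\bigr) - f\bigl(f^m(x)\bigr)$ in $K[x]$.

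The key observation is the universal identity $f(u) - f(v) = (u - v)\, h(u, v)$ for some $h(u,v) \in K[u,v]$, which holds for any polynomial $f(x) \in K[x]$ (it follows from the fact that $u - v$ divides $u^k - v^k$ in $K[u,v]$ for each $k$ and linearity). Specializing $u = f^{m+n}(x)$ and $v = f^m(x)$ produces a polynomial identity in $K[x]$ exhibiting $q(x) = u - v$ as an explicit factor of $f(u) - f(v) = q(f(x))$.

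I expect no real obstacle here: the argument is just the two-line computation above, packaged with the standard lemma that $u - v \mid f(u) - f(v)$. The only thing worth emphasizing in the write-up is that this argument is completely characteristic-free and does not depend on the polynomial $q(x)$ being squarefree, so that the remark preceding the lemma about valuations is not needed; the divisibility holds at the level of $K[x]$ directly.
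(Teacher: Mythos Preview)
Your proof is correct and takes a genuinely more elementary route than the paper's. The paper argues by first passing to the generic degree-$k$ polynomial $f(x) \in K(a_0,\ldots,a_k)[x]$, invoking Lemma~\ref{lem dynatomic squarefree} to conclude that $f^{m+n}(x) - f^m(x)$ is squarefree there, then noting that its roots are preperiodic and hence closed under $f$; finally it descends by specialization. Your argument bypasses all of this: the identity $f(u) - f(v) = (u-v)h(u,v)$ applied with $u = f^{m+n}(x)$, $v = f^m(x)$ gives the divisibility $q(x) \mid q(f(x))$ directly in $K[x]$, with no appeal to separability, genericity, or roots. This is strictly simpler and, as you note, works uniformly without any hypothesis on the multiplicity structure of $q$. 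The paper's approach does have the minor expository virtue of reusing the generic-squarefree machinery already set up for other purposes, but your direct argument is the cleaner proof of this particular lemma.
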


\begin{proof}
First suppose $f(x) = a_k x^k + a_{k-1}x^{k-1} + \ldots + a_1x + a_0 \in K(a_0, a_1, \ldots, a_k)[x]$ is the generic degree-$k$ polynomial over $K$. Lemma \ref{lem dynatomic squarefree} implies that $f^{m+n}(x) - f^m(x)$ is squarefree. The roots of $f^{m+n}(x) - f^m(x)$ are $f$-preperiodic hence closed under iteration by $f$. Therefore $f^{m+n}(x) - f^m(x)$ is $f$-stable. Stability is preserved under specialization.
\end{proof}

Lemma \ref{lem f stable} implies that $K[x]/(f^{m+n}(x) - f^m(x))$ inherits a $\ZZ\{f\}$-composition ring structure from $K[x]$. Furthermore, since $g(f^{m+n}(x)) \equiv g(f^m(x)) \bmod (f^{m+n}(x) - f^m(x))$ for all polynomials $g(x)$, the action of $\ZZ\{f\}$ factors through $\ZZ_{m,n}\{f\}$. This is summarized in the following lemma.

\begin{lemma}
\label{lem congruent operators}
Let $f(x) \in K[x]$, and let the composition ring $\ZZ\{f\}$ act on $K[x]$ by $g(x) \circ f := g(f(x))$. If $\alpha, \beta \in \ZZ\{f\}$ are elements such that $\alpha \equiv \beta \bmod \ZZ_{m,n}\{f\}$, then for all $g(x) \in K[x]$,
\[
    g(x) \circ \alpha \equiv g(x) \circ \beta \bmod (f^{m+n}(x) - f^m(x)).
\]
\end{lemma}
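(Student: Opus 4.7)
Let $I := (f^{m+n}(x) - f^m(x))$, and define a relation on $\ZZ\{f\}$ by $\alpha \sim \beta$ if $g(x) \circ \alpha \equiv g(x) \circ \beta \pmod{I}$ for every $g(x) \in K[x]$. The plan is to show that $\sim$ is a composition ring congruence on $\ZZ\{f\}$ which identifies $[f^{m+n}]$ with $[f^m]$; since the kernel of $\ZZ\{f\} \to \ZZ_{m,n}\{f\}$ is by construction the smallest such congruence, this inclusion yields the lemma.

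The technical heart of the argument is a sublemma asserting that $I$ is stable under the entire $\ZZ\{f\}$-action on $K[x]$: if $p \in I$ and $\psi \in \ZZ\{f\}$, then $p \circ \psi \in I$. I would prove this by structural induction on $\psi$, using that every element of $\ZZ\{f\}$ is built from the primitives $[f^k]$ via $+$, $\cdot$, and $\circ$ (as discussed after Definition \ref{def free comp}). The base case $\psi = [f^k]$ reduces, after writing $p = q \cdot (f^{m+n}(x) - f^m(x))$, to the divisibility $f^{m+n}(x) - f^m(x) \mid f^{m+n+k}(x) - f^{m+k}(x)$, which follows by iterating the $f$-stability of $f^{m+n}(x) - f^m(x)$ established in Lemma \ref{lem f stable}. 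The inductive steps over $+$, $\cdot$, and $\circ$ are routine consequences of the composition algebra axioms.

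With the sublemma in hand, I verify the congruence properties of $\sim$. First, $[f^{m+n}] \sim [f^m]$ by the elementary identity $u - v \mid g(u) - g(v)$. Compatibility of $\sim$ with $+$ and $\cdot$ is immediate from the composition algebra distributivity axioms $g \circ (\alpha + \beta) = g \circ \alpha + g \circ \beta$ and $g \circ (\alpha \beta) = (g \circ \alpha)(g \circ \beta)$. The main obstacle is compatibility with $\circ$: given $\alpha_i \sim \beta_i$ for $i = 1, 2$, I pass through the intermediate $\beta_1 \circ \alpha_2$. The step $\beta_1 \circ \alpha_2 \sim \beta_1 \circ \beta_2$ is immediate, since
\[
g \circ (\beta_1 \circ \alpha_2) - g \circ (\beta_1 \circ \beta_2) = (g \circ \beta_1) \circ (\alpha_2 - \beta_2) \in I
\]
by applying the hypothesis $\alpha_2 \sim \beta_2$ to the polynomial $g \circ \beta_1 \in K[x]$. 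For $\alpha_1 \circ \alpha_2 \sim \beta_1 \circ \alpha_2$, I write $g \circ \alpha_1 = (g \circ \beta_1) + r$ with $r \in I$ (by $\alpha_1 \sim \beta_1$), expand $\alpha_2$ as a $\ZZ$-linear combination of products of primitives, and note that each nontrivial term in $((g \circ \beta_1) + r) \circ \alpha_2 - (g \circ \beta_1) \circ \alpha_2$ carries at least one factor of the form $r(f^k(x))$; each such factor lies in $I$ by the sublemma, so the entire difference lies in $I$.
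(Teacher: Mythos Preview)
Your proof is correct and follows essentially the same approach as the paper: where the paper briefly observes (in the paragraph preceding the lemma) that $f$-stability makes $K[x]/(f^{m+n}(x)-f^m(x))$ into a $\ZZ\{f\}$-composition algebra whose action factors through $\ZZ_{m,n}\{f\}$, you unpack this into an explicit verification that the induced relation $\sim$ on $\ZZ\{f\}$ is a composition ring congruence containing $[f^{m+n}]\sim[f^m]$. The paper states the lemma as a summary of that paragraph rather than giving a separate proof, so your version is considerably more detailed---particularly in handling compatibility with $\circ$---but not different in substance.
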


\section{Results}
\label{sec results}

With everything in place, we now prove the main result.
\begin{thm}
\label{thm main}
Let $K$ be a field, let $f(x) \in K[x]$ be a polynomial of degree at least 2, and let $c, d, m, n$ be integers with $c, m\geq 0$ and $d, n \geq 1$. Suppose that
\begin{enumerate}
    \item\label{item:main divisible} either $m > c$ or $n\nmid d$,
    \item the cocore of $d$ is at least $m - \max(c - 1,0)$, and
    \item $x^n - 1$ divides the $d$th necklace polynomial $M_d(x)$ in $\QQ[x]$.
\end{enumerate}
Then $\Phi_{f,m,n}(x)$ divides $\Phi_{f,c,d}(x) - 1$.

Alternatively, if $d > 1$, $c - 1 \geq m$, and $n = 1$, then $\Phi_{f,m,n}(x)$ divides $\Phi_{f,c,d}(x) - 1$.
\end{thm}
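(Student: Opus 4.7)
The plan is to reduce to the case of generic $f$ and then feed the hypotheses into Proposition \ref{prop necklace vanishing} and Lemma \ref{lem congruent operators}. Lemma \ref{lem dynatomic squarefree} together with Lemma \ref{lem dynamic factor} makes all the $\Phi_{f,i,j}(x)$ squarefree and pairwise coprime for the generic degree-$k$ polynomial $f$, so any divisibility established in the generic setting specializes to all $f$ of degree $k$. Write $P^\pm(y) := y \circ \Phi_{f,d}^\pm = \prod_{e\mid d,\,\mu(e)=\pm 1}(f^{d/e}(y) - y) \in K[y]$, so that $\Phi_{f,d}(y) = P^+(y)/P^-(y)$. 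Set $M := \max(m - c + 1, 0)$; conditions (2) and (3) feed into Proposition \ref{prop necklace vanishing} (applied with $M$ in place of $m$) to give $\varphi_d = 0$ in $\ZZ\Psi_{M,n}$ and hence $\Phi_{f,d}^+ \equiv \Phi_{f,d}^- \bmod \ZZ_{M,n}\{f\}$. Lemma \ref{lem congruent operators} applied with $g(y) = y$ and, when $c \geq 1$, also $g(y) = f(y)$, then produces the polynomial congruences
\[
P^+(y) \equiv P^-(y), \qquad P^+(f(y)) \equiv P^-(f(y)) \pmod{f^{M+n}(y) - f^M(y)}.
\]

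When $c = 0$ we have $M = m$, and the first congruence gives $P^+(x) - P^-(x) \equiv 0 \pmod{f^{m+n}(x) - f^m(x)}$, hence modulo the factor $\Phi_{f,m,n}(x)$ (by Lemma \ref{lem dynamic factor}). Dividing by $P^-(x)$ yields $\Phi_{f,d}(x) \equiv 1 \pmod{\Phi_{f,m,n}(x)}$, provided $P^-(x)$ is coprime to $\Phi_{f,m,n}(x)$ for generic $f$---which holds under condition (1), since the roots of $P^-(x)$ are $f$-periodic with period dividing some $d/e$, while the roots of $\Phi_{f,m,n}(x)$ are either strictly preperiodic (when $m \geq 1$) or periodic of exact period $n$ with $n \nmid d$.

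When $c \geq 1$, the two displayed congruences combine to show that the cross-product polynomial
\[
N(y) := P^+(f(y))\,P^-(y) - P^-(f(y))\,P^+(y)
\]
satisfies $N(y) \equiv 0 \pmod{f^{M+n}(y) - f^M(y)}$. Substituting $y = f^{c-1}(x)$ converts the modulus to $f^{M+c-1+n}(x) - f^{M+c-1}(x)$, which is divisible by $f^{m+n}(x) - f^m(x)$ because $M + c - 1 \geq m$ by the choice of $M$, and hence by $\Phi_{f,m,n}(x)$. The key algebraic point is that $N(f^{c-1}(x))$ is exactly the numerator obtained when one puts $\Phi_{f,c,d}(x) - 1 = \Phi_{f,d}(f^c(x))/\Phi_{f,d}(f^{c-1}(x)) - 1$ over the common denominator $P^-(f^c(x))\,P^+(f^{c-1}(x))$, so $\Phi_{f,m,n}(x)$ divides this numerator.

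The main obstacle is then upgrading numerator divisibility to divisibility of $\Phi_{f,c,d}(x) - 1$ itself, which requires $\Phi_{f,m,n}(x)$ to be coprime to the denominator for generic $f$. Lemma \ref{lem dynamic factor} factors the denominator as a product of $\Phi_{f,i,j}(x)$ with $i \le c$ and $j \mid d/e$ for various $e \mid d$, and condition (1) is calibrated precisely so that the index $(i,j) = (m,n)$ never appears: if $m > c$ then $i \le c < m$, and if $n \nmid d$ then $j = n$ never divides $d/e$. Generic coprimality then yields $\Phi_{f,m,n}(x) \mid \Phi_{f,c,d}(x) - 1$, and specialization gives it for arbitrary $f$. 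For the alternative statement ($n = 1$, $c - 1 \ge m$, $d > 1$) the argument short-circuits: the elementary divisibility $u - v \mid \Phi_{f,d}(u) - \Phi_{f,d}(v)$ with $u = f^c(x)$ and $v = f^{c-1}(x)$, combined with $f^c(x) - f^{c-1}(x) = \prod_{i \le c-1}\Phi_{f,i,1}(x)$, shows $\Phi_{f,m,1}(x)$ divides the numerator of $\Phi_{f,c,d}(x) - 1$, while the denominator $\prod_{i \le c-1}\Phi_{f,i,d}(x)$ is coprime (generically) to $\Phi_{f,m,1}(x)$ since $d > 1$; specialization completes the proof.
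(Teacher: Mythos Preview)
Your argument is correct and follows essentially the same route as the paper: reduce to generic $f$, invoke Proposition~\ref{prop necklace vanishing} to get $\Phi_{f,d}^+ \equiv \Phi_{f,d}^-$ in $\ZZ_{M,n}\{f\}$, pass to the polynomial congruence via Lemma~\ref{lem congruent operators}, and then use the pairwise coprimality of the $\Phi_{f,i,j}$ (from Lemmas~\ref{lem dynatomic squarefree} and~\ref{lem dynamic factor}) to clear the denominator. Your cross-product packaging of the $c\ge 1$ step and the $u-v\mid p(u)-p(v)$ packaging of the alternative case are algebraic reformulations of the paper's ``evaluate at a root $\alpha$'' arguments, not genuinely different ideas.

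One small slip to fix: your formula $M=\max(m-c+1,0)$ does \emph{not} give $M=m$ when $c=0$; it gives $M=m+1$, and then condition~(2) (cocore $\ge m$) is not strong enough to feed Proposition~\ref{prop necklace vanishing} with that value of $M$. You clearly intend the correct value $M = \max\bigl(m-\max(c-1,0),\,0\bigr)$, which matches condition~(2) exactly and specializes to $M=m$ at $c=0$ and to $M=\max(m-c+1,0)$ for $c\ge 1$; with that correction everything goes through as written.
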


\begin{proof}[Proof of Theorem \ref{thm main}]
It suffices to prove the result for $f(x) \in K(a_0, a_1, \ldots, a_k)[x]$ the generic degree $k\geq 2$ polynomial over $K$. Suppose (1), (2), and (3) hold. We first prove the result assuming $c = 0$. Assumptions (2) and (3) imply that $\Phi_{f,d}^+ \equiv \Phi_{f,d}^- \bmod \ZZ_{m,n}\{f\}$ by Proposition \ref{prop necklace vanishing}.
If $\Phi_{f,d}^\pm(x) := x \circ \Phi_{f,d}^\pm$, then by Lemma \ref{lem congruent operators},
\begin{equation}
\label{eqn dynatomic congruence}
    \Phi_{f,d}^+(x) \equiv \Phi_{f,d}^-(x) \bmod (f^{m+n}(x) - f^m(x)).
\end{equation}
If $\alpha \in \overline{K(a_0, a_1, \ldots, a_k)}$ is a root of $\Phi_{f,m,n}(x)$, then Lemma \ref{lem dynamic factor} and \eqref{eqn dynatomic congruence} imply that
\[
    \Phi_{f,d}^+(\alpha) = \Phi_{f,d}^-(\alpha).
\]
If $m > 0$ or $n\nmid d$, then $f^e(\alpha) - \alpha \neq 0$ for any $e \mid d$ by Lemma \ref{lem dynatomic squarefree} and Lemma \ref{lem dynamic factor}; hence $\Phi_{f,d}^\pm(\alpha) \neq 0$.
Observe that
\begin{align*}
    \frac{\Phi_{f,d}^+(x)}{\Phi_{f,d}^-(x)}
    &= x \circ ([f] - [1])^{\varphi_d^+ - \varphi_d^-}\\
    &= x \circ ([f] - [1])^{\sum_{e\mid d}\mu(e)[d/e]}\\
    &= \prod_{e\mid d}(f^{d/e}(x) - x)^{\mu(e)}\\
    &= \Phi_{f,d}(x).
\end{align*}
Thus $\Phi_{f,d}(\alpha) = 1$. Since this holds for all roots $\alpha$ and $\Phi_{f,m,n}(x)$ is squarefree by Lemma \ref{lem dynatomic squarefree}, we conclude that $\Phi_{f,m,n}(x)$ divides $\Phi_{f,d}(x) - 1$.

Next suppose $c > 0$ and that the cocore of $d$ is at least $m - c + 1$. The above argument implies that $\Phi_{f,m-c + 1,n}(x)$ and $\Phi_{f,m-c,n}(x)$ divide $\Phi_{f,d}(x) - 1$. If $\alpha$ is a root of $\Phi_{f,m,n}(x)$, then $f^{c-i}(\alpha)$ is a root of $\Phi_{f,m-c +i,n}(x)$.
Hence $\Phi_{f,d}(f^c(\alpha)) = \Phi_{f,d}(f^{c-1}(\alpha)) = 1$ and
\[
    \Phi_{f,c,d}(\alpha) = \frac{\Phi_{f,d}(f^c(\alpha))}{\Phi_{f,d}(f^{c-1}(\alpha))} = \frac{1}{1} = 1.
\]
Thus $\Phi_{f,m,n}(x)$ divides $\Phi_{f,c,d}(x) - 1$ by Lemma \ref{lem dynatomic squarefree}.

Finally assume that $d > 1$, $c -1 \geq m$, and $n = 1$. If $\alpha$ is a root of $\Phi_{m,1}(x)$, then $c - 1 \geq m$ implies that $\beta := f^c(\alpha) = f^{c - 1}(\alpha)$. Furthermore, since $d > 1$ and $f(x)$ is generic, $\Phi_{f,d}(\beta) \neq 0$. Hence
\[
\Phi_{f,c,d}(\alpha) = \frac{\Phi_{f,d}(f^c(\alpha))}{\Phi_{f,d}(f^{c-1}(\alpha))} = \frac{\Phi_{f,d}(\beta)}{\Phi_{f,d}(\beta)} = 1.
\]
This identity holds for all $\alpha$ and $\Phi_{f,m,1}(x)$ is squarefree by Lemma \ref{lem dynatomic squarefree}, therefore $\Phi_{f,m,n}(x)$ divides $\Phi_{f,c,d}(x) - 1$.
\end{proof}

\begin{example}\label{ex necessary condition}
We show that condition \eqref{item:main divisible} from Theorem~\ref{thm main} is {\it generically} necessary, in the sense that if $f(x) = a_kx^k + \cdots + a_1x + a_0 \in K(a_0,a_1\ldots,a_k)[x]$ is the generic polynomial of degree $k$, and if $n,d \ge 1$ are integers satisfying $n \mid d$, then $\Phi_{f,0,n}(x) = \Phi_{f,n}(x)$ does not divide $\Phi_{f,d}(x) - 1$. If $n = d$, this is immediate so we assume that $n < d$.

Consider the polynomial $f(x) := x^k + a \in K(a)[x]$, where $a$ is an indeterminate. If the characteristic of $K$ does not divide $k$, then Theorem 2.2, Corollary 3.3, and Proposition 3.4 of \cite{MV} combine to show that the resultant $\Res(\Phi_{f,n}(x), \Phi_{f,d}(x))$ with respect to $x$ is a nonconstant polynomial in $K[a]$. Thus there exists $a_0 \in \overline{K}$ such that, for the polynomial $f_0(x) := x^k + a_0$, the dynatomic polynomials $\Phi_{f_0,n}(x)$ and $\Phi_{f_0,d}(x)$ have a common root $x_0$. (Over $\CC$, these values of $c_0$ are roots of hyperbolic components of the degree-$k$ multibrot set.) It follows that $\Phi_{f_0,n}(x)$ cannot divide $\Phi_{f_0,d}(x) - 1$, therefore this divisibility relation cannot hold generically.

Next suppose that the characteristic of $K$ divides $k$. Let $\zeta \in \overline{K}$ be a root of $\Phi_{d}(x)$, let $f(x) := x^k + \zeta x$, and let $\alpha$ be any root of $\Phi_{f,n}(x)$. Since $f'(x) = \zeta$, the period-$n$ multiplier of $\alpha$ is $(f^n)'(\alpha) = \zeta^n$, a root of $\Phi_{d/n}(x)$. It then follows from \cite[Thm. 2.2]{MV} that $\Res(\Phi_{f,n}(x), \Phi_{f,d}(x)) = 0$. Therefore $\Phi_{f,n}(x)$ and $\Phi_{f,d}(x)$ have a common root, whence $\Phi_{f,n}(x)$ does not generically divide $\Phi_{f,d}(x) - 1$.

\end{example}

\begin{example}
Condition (1) in Theorem \ref{thm main} is sufficient to guarantee that $\Phi_{f,d}^\pm(\alpha) \neq 0$ for any root $\alpha$ of $\Phi_{f,m,n}(x)$. If (1) fails to hold, deciding whether or not $\Phi_{f,m,n}(x)$ divides $\Phi_{f,d}(x) - 1$ is more subtle. 

Consider the quadratic polynomial family $f_a(x) = x^2 + a$. One may verify computationally that $\Phi_{f_a,6}(x) - 1$ factors over the function field $\QQ(a)$ as
\[
    \Phi_{f_a,6}(x) - 1 = h_a(x)\Phi_{f_a,1,2}(x)\Phi_{f_a,1,1}(x)
\]
where $h_a(x)$ is a degree 50 irreducible non-dynatomic polynomial with coefficients in $\QQ(a)$. The cocore of $d = 6$ is $1$ and 
\[
    M_6(x) = \tfrac{1}{6}(x^6 - x^3 - x^2 + x) = \tfrac{1}{6}(x^4 + x^2 - x)(x^2 - 1),
\]
hence conditions (2) and (3) of Theorem \ref{thm main} hold for $m = 0$ and $n = 2$, and yet $\Phi_{f_a,2}(x)$ does not generically divide $\Phi_{f_a,6}(x) - 1$. On the other hand, if $a = -1$ or $a = -5/4$, then one may check that $\Phi_{f_a,0,2}(x)$ does divide $\Phi_{f,6}(x) - 1$.
\end{example}

If $m = 0$ and $n = 1$, then condition (1) of Theorem \ref{thm main} is never satisfied. However, the following Proposition shows that in certain cases the conclusion of Theorem \ref{thm main} still holds.

\begin{prop}
\label{prop dynatomic cyclotomic}
Let $f(x) \in K[x]$ be a polynomial with fixed point $\alpha \in \overline{K}$,  let $\lambda := f'(\alpha)$ be the multiplier of $\alpha$, and let $d\geq 2$ be an integer, then
\[
    \Phi_{f,d}(\alpha) = \Phi_d(\lambda).
\]
Moreover, if $\lambda = 0$ or if 
\begin{enumerate}
    \item $\lambda$ is a primitive $n$th root of unity,
    \item $n$ is coprime to $d$, and
    \item $x^n - 1$ divides $M_d(x)$,
\end{enumerate}
then 
$
  \Phi_{f,d}(\alpha) = 1.
$
\end{prop}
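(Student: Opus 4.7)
The proposition has two assertions: the general identity $\Phi_{f,d}(\alpha) = \Phi_d(\lambda)$, and the conclusion $\Phi_{f,d}(\alpha) = 1$ under the stated hypotheses. For the identity, I would reduce to the case $\alpha = 0$ by conjugating $f$ by the translation $x \mapsto x - \alpha$; now $f(x) = \lambda x + O(x^2)$. Since $f^k(0) = 0$, each $f^k(x) - x$ is divisible by $x$; write $f^k(x) - x = x\,g_k(x)$, and the chain rule gives $g_k(0) = (f^k)'(0) - 1 = \lambda^k - 1$. Using $\sum_{e \mid d}\mu(e) = 0$ for $d \geq 2$, the factors of $x$ cancel in the product, yielding the equality of rational functions
\[
    \Phi_{f,d}(x) \;=\; \prod_{e\mid d}\bigl(f^{d/e}(x) - x\bigr)^{\mu(e)} \;=\; \prod_{e\mid d} g_{d/e}(x)^{\mu(e)}.
\]
When each $\lambda^{d/e} \neq 1$ one can directly evaluate at $x = 0$ to obtain $\Phi_{f,d}(0) = \prod_{e\mid d}(\lambda^{d/e} - 1)^{\mu(e)} = \Phi_d(\lambda)$.

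To handle the case in which $\lambda$ is a root of unity of some order dividing some $d/e$, I would perturb. Over $K(t)$, set $\tilde f(x) := f(x) + tx$; then $\tilde f$ still fixes $0$, but with multiplier $\lambda + t$, which is transcendental over $K$ and hence not a root of unity of any order. The previous argument applies verbatim to $\tilde f$ and gives $\Phi_{\tilde f, d}(0) = \Phi_d(\lambda + t)$ as an identity in $K(t)$. Since the coefficients of $\Phi_{f,d}(x)$ depend polynomially on the coefficients of $f$ (Morton--Patel), both sides actually lie in $K[t]$; specializing $t = 0$ yields $\Phi_{f,d}(\alpha) = \Phi_d(\lambda)$, completing Part 1.

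For the second assertion, the case $\lambda = 0$ is immediate from Part 1: $\Phi_d(0) = \prod_{e\mid d}(-1)^{\mu(e)} = (-1)^{\sum_{e\mid d}\mu(e)} = 1$ for $d \geq 2$. In the remaining case (with $n \geq 2$), the coprimality $\gcd(n,d) = 1$ forces $\lambda^{d/e} \neq 1$ for every $e \mid d$, and each residue $(d/e) \bmod n$ lies in $(\ZZ/n)^\times$. Grouping the factors of $\Phi_d(\lambda) = \prod_e(\lambda^{d/e}-1)^{\mu(e)}$ by this residue gives
\[
    \Phi_d(\lambda) \;=\; \prod_{j \in (\ZZ/n)^\times}(\lambda^j - 1)^{c_j}, \qquad c_j := \sum_{\substack{e\mid d\\ d/e \equiv j \bmod n}}\mu(e).
\]
The hypothesis $x^n - 1 \mid M_d(x)$ says $\sum_{e\mid d} \mu(e)\zeta^{d/e} = 0$ for every $n$th root of unity $\zeta$; after regrouping this reads $\sum_{j \in (\ZZ/n)^\times} c_j \zeta^j = 0$ for all $n$ values of $\zeta$. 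As the polynomial $\sum_j c_j x^j$ has degree less than $n$ and vanishes at $n$ distinct points, it is identically zero, so $c_j = 0$ for every $j$; hence $\Phi_d(\lambda) = \prod_j(\lambda^j - 1)^0 = 1$.

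The main obstacle I anticipate is justifying the specialization in Part 1, i.e., that $\Phi_{\tilde f, d}(0) \in K(t)$ really lies in $K[t]$ and specializes at $t = 0$ to $\Phi_{f,d}(\alpha)$. This is resolved by invoking the Morton--Patel result (already cited in the paper), which guarantees that the coefficients of the dynatomic polynomial are polynomial expressions in the coefficients of the underlying polynomial, so specialization is simply coefficient substitution. Everything else is routine manipulation once the correct grouping is in place.
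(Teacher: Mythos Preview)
Your proof is correct. For the identity $\Phi_{f,d}(\alpha)=\Phi_d(\lambda)$ you and the paper argue the same way---first-order expansion at the fixed point, cancel the common linear factor via $\sum_{e\mid d}\mu(e)=0$, then extend from generic $\lambda$ by continuity---the only cosmetic difference being that the paper invokes Zariski density on the variety of pairs $(f,\alpha)$ with $f(\alpha)=\alpha$, while you run a one-parameter deformation $\tilde f=f+tx$ and specialize at $t=0$. The genuine divergence is in the root-of-unity clause: the paper translates hypothesis~(3) into the hyperplane covering $\h\U_n\subseteq\bigcup_{p\mid d}\H_p$ via Theorem~\ref{thm hyperplane} and then quotes \cite[Thm.~1.1]{hyde_cyclo} as a black box to obtain $\Phi_d(\lambda)=1$, whereas you give a direct, self-contained computation---regrouping $\prod_{e\mid d}(\lambda^{d/e}-1)^{\mu(e)}$ by the residue $j\equiv d/e\pmod n$ and observing that the exponents $c_j$ are exactly the coefficients of $dM_d(x)$ reduced modulo $x^n-1$, hence all zero. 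Your route is more elementary and avoids the external reference; the paper's route embeds the result into the character-theoretic framework of Section~\ref{sec cyclo factors of neck}. One small caveat: your parenthetical restriction $n\ge 2$ is genuinely needed for the step $\lambda^{d/e}\ne 1$, and the case $n=1$ (where $\Phi_d(1)=p$ for prime powers $d=p^k$) is not covered by either argument---it appears to be an implicit exclusion in the statement.
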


Note that $\Phi_{f,d}(\alpha)$ is the $d$th dynatomic polynomial of $f(x)$ evaluated at a fixed point $\alpha$ and $\Phi_d(\lambda)$ is the $d$th cyclotomic polynomial evaluated at the multiplier $\lambda$ of $\alpha$.

\begin{proof}
Since $(f^k)'(\alpha) = \lambda^k$,
\[
    f^k(x) - x \equiv (\lambda^k - 1)(x - \alpha) \bmod (x - \alpha)^2.
\]
First suppose that $\lambda$ is not a $d$th root of unity. Then the $(x - \alpha)$-adic valuation of $f^e(x) - x$ is one for each $e\mid d$. Thus
\[
    \Phi_{f,d}(x) = \prod_{e\mid d}(f^e(x) - x)^{\mu(d/e)}
    = \prod_{e\mid d}\left(\frac{f^e(x) - x}{x - \alpha}\right)^{\mu(d/e)},
\]
where the second equality uses the fact that $\sum_{e\mid d}\mu(d/e) = 0$ for any $d\geq 2$. Evaluating at $x = \alpha$ gives
\[
    \Phi_{f,d}(\alpha) = \prod_{e\mid d}(\lambda^e - 1)^{\mu(d/e)} = \Phi_d(\lambda).
\]
Fix a degree $k$ and consider the affine algebraic variety
\[
    V_k := \{(f,\alpha) : \deg(f) \leq k \text{ and }\alpha \text{ is a fixed point of }f\}.
\]
The identity $\Phi_{f,d}(\alpha) = \Phi_d(f'(\alpha))$ holds on the Zariski open subset of all pairs $(f,\alpha)$ for which the multiplier $\lambda = f'(\alpha)$ is not a $d$th root of unity, hence it must hold on all of $V_k$.

If $\lambda = 0$, then $d \ge 2$ implies that $\Phi_d(0) = 1$, hence $\Phi_{f,d}(\alpha) = 1$.
Our assumption that $n$ is coprime to $d$ and that $x^n - 1$ divides $M_d(x)$ implies, by Theorem \ref{thm hyperplane}, that $\h{\U}_n \subseteq \bigcup_{p\mid d}\H_p$. Therefore $1 = \Phi_d(\lambda) = \Phi_{f,d}(\alpha)$ by \cite[Thm. 1.1]{hyde_cyclo}.
\end{proof}

\begin{remark}
The identity proved in Proposition \ref{prop dynatomic cyclotomic} is implicit in the proof of Theorem 2.2 of Morton and Vivaldi \cite{MV}; see the paragraph starting with display line (2.3). Hyde \cite[Thm. 1.8(2)]{hyde_cyclo} characterizes the pairs $(n,d)$ for which $d\nmid n$ and $\Phi_d(\zeta_n) = 1$. Using this characterization and Proposition \ref{prop dynatomic cyclotomic} one may construct special dynamical unit relations from fixed points $\alpha$ with $\lambda = \zeta_n$ which do not hold universally.

For example, one may check that $\Phi_{231}(\zeta_{12}) = 1$ and $x^{12} - 1$ does not divide $M_{231}(x)$. It is not generally the case that $\Phi_{f,231}(\alpha) = 1$ for fixed points $\alpha$, but this identity does hold if the multiplier of $\alpha$ is a primitive 12th root of unity (e.g. $f(x) = x^2 + \zeta_{12}x$ with $\alpha = 0$.)
\end{remark}

\subsection{Dynamical necklace polynomials}
The composition ring $\ZZ\{f\}$ also carries an additive $\Psi$-module structure where the natural action $[m]\cdot [f] := [f^m]$ is extended linearly. With respect to this structure we may define \emph{dynamical necklace polynomials} $M_{f,d}(x)$ analogous to the necklace polynomials $M_d(x)$,
\[
    M_{f,d}(x) := \frac{1}{d}\sum_{e\mid d}\mu(e) f^{d/e}(x) = (x/d) \circ \varphi_d [f].
\]
We are unaware of any natural interpretation, dynamical or otherwise, of the dynamical necklace polynomials $M_{f,d}(x)$. Nevertheless, the methods developed in the previous sections allow us to easily prove the following analog of Theorem \ref{thm main}.

\begin{prop}
Let $K$ be a field and let $f(x) \in K[x]$ be a polynomial. If
\begin{enumerate}
    \item the cocore of $d$ is at least $m$, and
    \item $x^n - 1$ divides the $d$th necklace polynomial $M_d(x)$ in $\QQ[x]$,
\end{enumerate}
then $f^{m+n}(x) - f^m(x)$ divides $M_{f,d}(x)$.
\end{prop}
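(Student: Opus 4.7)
The plan is to mimic the proof of Theorem \ref{thm main}, but using the \emph{additive} $\Psi$-module structure on $\ZZ\{f\}$ (which produces $M_{f,d}$) in place of the multiplicative one (which produces $\Phi_{f,d}^{\pm}$). The role played by composition $x \circ \alpha$ into $K[x]$ will instead be played by the linear evaluation $[f^a] \mapsto f^a(x)$ into $K[x]/(f^{m+n}(x) - f^m(x))$, and Lemma \ref{lem congruent operators} will be replaced by an additive analogue.

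First, hypotheses (1) and (2) are exactly the hypotheses of Proposition \ref{prop necklace vanishing}, so $\varphi_d = 0$ in $\ZZ\Psi_{m,n}$; equivalently, $\varphi_d^+ \equiv \varphi_d^- \brmod{m + n = m}$ in $\ZZ\Psi$. The additive analogue of Lemma \ref{lem congruent operators} that we need is the following: for natural numbers $a, b$ with $a \equiv b \brmod{m + n = m}$,
\[
    f^a(x) \equiv f^b(x) \bmod (f^{m+n}(x) - f^m(x)).
\]
Setting $q(x) := f^{m+n}(x) - f^m(x)$, Lemma \ref{lem f stable} shows $q(x)$ is $f$-stable, so iterating gives $q(x) \mid f^{m+n+j}(x) - f^{m+j}(x)$ for every $j \geq 0$. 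The case $a = b$ is trivial; otherwise $a, b \geq m$ and (taking $a \geq b$) $a - b = kn$ for some $k > 0$, and the telescoping identity
\[
    f^a(x) - f^b(x) = \sum_{i=0}^{k-1}\bigl(f^{b+(i+1)n}(x) - f^{b+in}(x)\bigr)
\]
exhibits $f^a(x) - f^b(x)$ as a sum of polynomials each of the form $f^{m+n+j}(x) - f^{m+j}(x)$ with $j = b - m + in \geq 0$, hence each divisible by $q(x)$.

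Combining these, the congruence $\varphi_d^+ \equiv \varphi_d^- \brmod{m+n = m}$ in $\Psi_{m,n}$ means that, as formal $\NN$-linear sums of classes in $\NN^\circ/(m+n = m)$, the exponents on each side match with equal multiplicity. Applying the additive congruence termwise therefore yields
\[
    \sum_{e \mid d} \mu(e)\, f^{d/e}(x) \equiv 0 \bmod q(x).
\]
The left side is $d\cdot M_{f,d}(x)$, so $q(x)$ divides $M_{f,d}(x)$ in $\QQ[x]$, as claimed. The main obstacle is the additive congruence between $f^a(x)$ and $f^b(x)$; once this is established, the rest is a direct transcription of the $c = 0$ portion of the proof of Theorem \ref{thm main}.
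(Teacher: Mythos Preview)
Your proof is correct and takes essentially the same approach as the paper: invoke Proposition~\ref{prop necklace vanishing} to obtain $\varphi_d = 0$ in $\ZZ\Psi_{m,n}$, then transfer this to $K[x]/(f^{m+n}(x)-f^m(x))$. The paper routes the transfer through the composition-algebra machinery (an additive version of Lemma~\ref{lem compatible quotients} together with Lemma~\ref{lem congruent operators} and the identity $r\circ 0 = 0$), whereas you unpack that machinery into the explicit $f$-stability-plus-telescoping argument; one cosmetic slip is that your final ``in $\QQ[x]$'' should read ``in $K[x]$'' (which is fine since $d$ must be a unit in $K$ for $M_{f,d}$ to be defined at all).
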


\begin{proof}
Proposition \ref{prop necklace vanishing} and assumptions (2), (3) imply that $\varphi_d = 0 \bmod \ZZ\Psi_{m,n}$. Thus $\varphi_d \alpha \equiv 0 \bmod \ZZ_{m,n}\{f\}$ for any $\alpha \in \ZZ\{f\}$ by an additive version of Lemma \ref{lem compatible quotients}. Hence by Lemma \ref{lem congruent operators},
\[
    M_{f,d}(x) = (x/d) \circ \varphi_d[f] \equiv (x/d) \circ 0 \bmod (f^{m+n}(x) - f^m(x)).
\]
Note that for $r \in R$ an element of any composition algebra,
\[
    r \circ 0 = r \circ (0 + 0) = (r \circ 0) + (r \circ 0),
\]
hence $r \circ 0 = 0$. Thus
\[
    M_{f,d}(x) \equiv 0 \bmod (f^{m+n}(x) - f^m(x)),
\]
which is to say that $f^{m+n}(x) - f^m(x)$ divides $M_{f,d}(x)$.
\end{proof}

\subsection{Dynamical units}
\label{sec:dynamical units}
Theorem \ref{thm main} has implications for the construction of dynamical units.
Inspired by the theory of cyclotomic and elliptic units, Narkiewicz \cite{nark} and later Morton and Silverman \cite{MS} initiated the study of \emph{dynamical units}: algebraic units constructed in one of several closely related ways from differences of preperiodic points of a rational map of the projective line. 
The inspiration comes from the fact that, in the dictionary between dynamical height and the usual Weil height on the torus $\GG_m(\overline{\QQ})$, the preperiodic points play the same role as that of roots of unity, so the fields generated by these points are naturally thought of as \emph{dynatomic fields} in analogy with the classical theory of cyclotomic fields. 
We refer the reader to \cite[Section 3.11]{ADS} for further background on dynamical units.

Some families of dynamical units are known. Let $K$ be a number field with ring of integers $\OO_K$. Narkiewicz (\cite{nark}, cf. \cite[Thm. 6.3(a)]{MS}) proved that if $f\in \OO_K[x]$ is a monic polynomial of degree at least $2$, $\alpha\in\overline{K}$ is a root of $\Phi_{f,n}(x)$ for some $n\geq 2$, and $i,j\geq 0$ are integers such that $\gcd(i-j,n)=1$, then 
\[
 \frac{f^i(\alpha) - f^j(\alpha)}{f(\alpha)-\alpha}\in \OO_K^\times
\]
 is a dynamical unit. If $\zeta=\zeta_{p^m}$ denotes a primitive prime power order root of unity, then the reader will note the similarity to cyclotomic units in the maximal totally real subfield $\QQ(\zeta)^{\mathrm{tr}}$ of $\QQ(\zeta)$ given by
 \[
 \zeta^{(1-a)/2} \frac{1-\zeta^a}{1-\zeta},\quad\text{where}\quad 1<a<p^m/2\text{ and }\gcd(a,p)=1.
 \]
 It is known that units of this form, together with $-1$, generate the unit group of $\QQ(\zeta)^{\mathrm{tr}}$, and that this group has finite index in the unit group of $\QQ(\zeta)$.
 
 Morton and Silverman proved in \cite[Thm. 6.3(b)]{MS} (see also \cite[Prop. 7.4]{MS} for a formulation which is closer to our result) that if $f(x) \in \OO_K[x]$ is monic of degree at least $2$, and $\alpha,\beta\in \overline{K}$ are points of strict period $m$ and $n$ respectively, where $m,n\in\NN$ satisfy $m\nmid n$ and $n\nmid m$, then in fact
\[
 \alpha-\beta\in \OO_K^\times
\]
is a dynamical unit. 
Under the same assumptions on $f(x)$, Benedetto proved that if $m\geq 1$ and $\alpha$ is a root of $\Phi_{f,m,n}(x)$ and $\beta$ is a root of $\Phi_{f,d}(x)$ for some $n,d\geq 1$, then again, $\alpha-\beta\in \OO_K^\times$ (see \cite[Thm. 3]{benedetto}). 
Benedetto's result has interesting implications. For example, if $\{\alpha_1,\ldots,\alpha_n\}$ is an $n$-cycle for $f(x) = x^2+c$, that is, if $f(\alpha_1)=\alpha_2$, $f(\alpha_2)=\alpha_3$, \ldots, $f(\alpha_n)=\alpha_1$, then Benedetto shows \cite[Theorem 1]{benedetto} that 
 \[
 \prod_{i=1}^n (f(\alpha_i) + \alpha_i ) = 1,
 \]
 and in particular, that $f(\alpha)+ \alpha$ is a dynamical unit. This result is particularly remarkable as, from a dynamical perspective, one would not expect the sum of points to be related to the dynamics of a quadratic map. For a more recent result involving quadratic forms and dynamical units for rational maps, we also refer the reader to the work of Panraksa and Washington \cite{PW}.
  
Theorem \ref{thm main} allows us to deduce similar results about dynamical units, extending the results of Morton-Silverman and Benedetto. 
Note that if $f\in \OO_K[x]$ is monic, then 
$\Phi_{f,m,n}(x)\in \OO_K[x]$ is monic as well, and so our preperiodic points are algebraic integers. It follows that if $\Phi_{f,m,n}(x)$ divides $\Phi_{f,d}(x) - 1$, then for each root $\alpha \in \overline{K}$ of $\Phi_{f,m,n}(x)$,
\begin{equation}
\label{eqn unit relation body}
    1 = \Phi_{f,d}(\alpha) = \prod_{\beta}(\alpha - \beta),
\end{equation}
where the product ranges over all the roots $\beta$ of $\Phi_{f,d}(x)$ with multiplicity. 
Since $\alpha, \beta$ are algebraic integers, \eqref{eqn unit relation body} implies that the differences $\alpha - \beta$ are dynamical units.
If the conditions of Theorem \ref{thm main} are satisfied for $m, n , d$, then \eqref{eqn unit relation body} holds for all $f(x)$ with degree at least 2. We view these as \emph{universal relations} for dynamical units. 
In the case where the conditions of Theorem \ref{thm main} are met with $m=0$ and $n\nmid d$, we recover the result of Morton and Silverman quoted above. However, our result also applies in cases where the results of Morton and Silverman, and those of Benedetto, do not apply.

\begin{example}
If $(m,n,c,d)=(1,2,1,3)$, then the conditions of Theorem \ref{thm main} hold. Suppose that $K$ is a number field, $f(x)\in \OO_K[x]$ is a monic polynomial of degree at least $2$. If $\alpha, \beta \in \overline{K}$ are roots of $\Phi_{f,1,2}(x)$ and $\Phi_{f,1,3}(x)$, respectively, then $\alpha - \beta$ is a dynamical unit.
This class of dynamical units is new; the results of Morton-Silverman and Benedetto for differences of preperiodic points both required at least one of the points to be purely periodic, while both points here are \emph{strictly} preperiodic.
\end{example}

Morton and Silverman \cite[Prop. 7.4(b)]{MS} prove that if all the prime factors of $d > 1$ are congruent to $1 \bmod n$, then $\Phi_{f,n}(x)$ divides $\Phi_{f,d}(x) - 1$. This is a special case of Corollary \ref{cor:MS}.

\begin{cor}
\label{cor:MS}
Let $d > 1$ and $n \geq 1$ be integers such that $n \nmid d$ and suppose that $d$ is divisible by some prime $p \equiv 1 \bmod n$. Then $\Phi_{f,n}(x)$ divides $\Phi_{f,d}(x) - 1$.
\end{cor}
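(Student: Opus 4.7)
The plan is to deduce Corollary \ref{cor:MS} directly from Theorem \ref{thm main} applied with $m = c = 0$, where the only nontrivial task is to verify hypothesis (3) of the theorem, namely that $x^n - 1$ divides $M_d(x)$.

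First I would check the easy hypotheses of Theorem \ref{thm main} with $m = c = 0$. Hypothesis (1) requires $m > c$ or $n \nmid d$; the first fails since $m = c = 0$, but the second is given by assumption. Hypothesis (2) requires the cocore of $d$ to be at least $m - \max(c-1,0) = 0$, which is automatic since cocores of positive integers are at least $1$.

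The substantive step is to verify hypothesis (3) using the hyperplane criterion of Theorem \ref{thm intro hyperplane}: $x^n - 1 \mid M_d(x)$ iff $\h{\U}_n \subseteq \bigcup_{p \mid d, \s p\nmid n} \H_p$. Let $p \mid d$ be a prime with $p \equiv 1 \pmod{n}$. Observe first that $p \nmid n$, since otherwise $p \mid n$ would combine with $p \equiv 1 \pmod n$ to give $p \mid 1$. Now for every Dirichlet character $\chi \in \h{\U}_n$, the congruence $p \equiv 1 \pmod n$ forces $\chi(p) = \chi(1) = 1$, so $\chi \in \H_p$. Hence $\H_p = \h{\U}_n$ entirely, which trivially gives the required covering $\h{\U}_n \subseteq \H_p \subseteq \bigcup_{q \mid d, \s q \nmid n} \H_q$. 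By Theorem \ref{thm intro hyperplane}, condition (3) holds.

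With all three hypotheses verified, Theorem \ref{thm main} yields that $\Phi_{f,0,n}(x) = \Phi_{f,n}(x)$ divides $\Phi_{f,0,d}(x) - 1 = \Phi_{f,d}(x) - 1$, which is the desired conclusion. No step is a genuine obstacle; the only thing to notice is the small observation that a prime $p \equiv 1 \pmod n$ automatically satisfies $p \nmid n$, so the corresponding hyperplane $\H_p$ appears in the union and is the whole character group.
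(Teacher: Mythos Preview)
Your proof is correct and follows the same overall strategy as the paper: apply Theorem~\ref{thm main} with $m=c=0$ and verify its three hypotheses, the only nontrivial one being~(3). The sole difference lies in how~(3) is checked. The paper argues directly from the factorization $\varphi_d = \prod_{p} [p^{k_p-1}]([p]-[1])$ that $p\equiv 1\bmod n$ forces $\varphi_d\equiv 0\brmod{n}$, and then cites the proof of Proposition~\ref{prop necklace vanishing} to conclude $x^n-1\mid M_d(x)$; you instead invoke the hyperplane criterion of Theorem~\ref{thm intro hyperplane}, observing that $\H_p=\h{\U}_n$ when $p\equiv 1\bmod n$. The paper itself remarks on exactly this hyperplane interpretation immediately after its proof, so the two arguments are really the same observation phrased in dual languages.
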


\begin{proof}
Recall that if $d = \prod_p p^{k_p}$ is the prime factorization of $d$, then $\varphi_d$ factors as
\[
    \varphi_d = \prod_{p} [p^{k_p-1}]([p] - [1]).
\]
Thus if $p \equiv 1 \bmod n$, then $\varphi_d \equiv 0 \brmod{n}$. The proof of Proposition \ref{prop necklace vanishing} shows that this is equivalent to $x^n - 1$ dividing $M_d(x)$. Conditions (1) and (2) of Theorem \ref{thm main} are trivially satisfied since $m, c = 0$, hence Theorem \ref{thm main} implies that $\Phi_{f,n}(x)$ divides $\Phi_{f,d}(x) - 1$.
\end{proof}

Note that if all the primes dividing $d$ are $1 \bmod n$, as is assumed in \cite[Prop. 7.4(b)]{MS}, then $d$ and $n$ are coprime, hence $n \nmid d$. Thus the Morton-Silverman result follows. In terms of hyperplanes covering the group of Dirichlet characters (see Section \ref{sec cyclo factors of neck}), the case $p \equiv 1 \bmod n$ for some prime $p \mid d$ corresponds to the situation where $\H_p = \widehat{\U}_n$ is the trivial hyperplane.

We can generalize the result of Benedetto in the following fashion:
\begin{prop}
\label{thm benedetto generalization}
 Suppose that $K$ is a number field with ring of integers $\OO_K$, $f(x) \in \OO_K[x]$ is monic of degree at least $2$, and $\beta\in \overline{K}$ is a root of $\Phi_{f,d}(x)$ for some $d\geq 2$. Then $\Phi_{f,1,1}(\beta)$ is a dynamical unit satisfying the relation
\begin{equation}
  \prod_{\substack{\beta\\ \Phi_{f,d}(\beta)=0}} \Phi_{f,1,1}(\beta) = 1
\end{equation}
where the product is taken over the roots of $\Phi_{f,d}$ with multiplicity.
\end{prop}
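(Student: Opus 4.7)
The plan is to apply Theorem \ref{thm main} with parameters $(c, m, n) = (0, 1, 1)$ and extract both assertions from the resulting divisibility $\Phi_{f,1,1}(x) \mid \Phi_{f,d}(x) - 1$. I first verify the hypotheses of Theorem \ref{thm main}: condition \eqref{item:main divisible} holds because $m = 1 > 0 = c$; condition (2) requires the cocore of $d$ to be at least $1$, which is automatic; and condition (3) is the classical identity $M_d(1) = \tfrac{1}{d}\sum_{e \mid d}\mu(e) = 0$, valid for $d \geq 2$. Theorem \ref{thm main} thus produces a factorization $\Phi_{f,d}(x) - 1 = \Phi_{f,1,1}(x)\,h(x)$ in $K[x]$.

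Next I would observe that because $f(x) \in \OO_K[x]$ is monic of degree at least $2$, both $\Phi_{f,d}(x)$ and $\Phi_{f,1,1}(x)$ are monic with coefficients in $\OO_K$, hence so is $\Phi_{f,d}(x) - 1$; the monic division algorithm then forces $h(x) \in \OO_K[x]$. Evaluating the factorization at a root $\beta$ of $\Phi_{f,d}$ yields $\Phi_{f,1,1}(\beta)\cdot h(\beta) = -1$. Since both factors are algebraic integers, each must lie in $\OO_{K(\beta)}^\times$, so $\Phi_{f,1,1}(\beta)$ is a dynamical unit.

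For the product relation, I would write $\Phi_{f,d}(x) = \prod_\beta(x - \beta)$ and $\Phi_{f,1,1}(x) = \prod_\alpha(x - \alpha)$ with multiplicity. The divisibility shows $\Phi_{f,d}(\alpha) = \prod_\beta(\alpha - \beta) = 1$ for every root $\alpha$ of $\Phi_{f,1,1}$. Consequently,
\[
\prod_\beta \Phi_{f,1,1}(\beta) \;=\; \prod_\beta \prod_\alpha (\beta - \alpha) \;=\; (-1)^{DN}\prod_\alpha \prod_\beta (\alpha - \beta) \;=\; (-1)^{DN},
\]
where $D := \deg \Phi_{f,d}$ and $N := \deg \Phi_{f,1,1} = k(k-1)$ with $k := \deg f \geq 2$. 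Since $N$ is a product of two consecutive integers, it is even, and hence the sign is $+1$.

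There is no serious obstacle here; the only care is the sign computation in the last display, and that is benign because $\deg \Phi_{f,1,1}$ is always even when $\deg f \geq 2$. As an alternative, the product identity also admits a direct telescoping proof: the roots of $\Phi_{f,d}$ are closed under $f$, and within each $d$-cycle the product $\prod_{i=0}^{d-1}\Phi_{f,1,1}(f^i(\beta)) = \prod_{i=0}^{d-1}\tfrac{f^{i+2}(\beta) - f^{i+1}(\beta)}{f^{i+1}(\beta) - f^i(\beta)}$ telescopes to $1$, so multiplying over all orbits yields the same conclusion without any sign bookkeeping.
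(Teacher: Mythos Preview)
Your proof is correct and follows essentially the same route as the paper: verify the hypotheses of Theorem~\ref{thm main} with $(c,m,n)=(0,1,1)$, deduce $\Phi_{f,1,1}\mid \Phi_{f,d}-1$, and then pass to the resultant-style double product to obtain the relation. The paper phrases the last step as $\Res(\Phi_{f,1,1},\Phi_{f,d})=1$ expressed two ways, glossing over the sign swap; your explicit computation that $\deg\Phi_{f,1,1}=k(k-1)$ is even is a welcome bit of extra care, and your direct argument that $\Phi_{f,1,1}(\beta)\cdot h(\beta)=-1$ forces $\Phi_{f,1,1}(\beta)\in\OO_{K(\beta)}^\times$ is cleaner than inferring unit-ness from the product identity alone.
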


\begin{proof}
We begin by noting that if $m=n=1$ and $c=0$ and $d\geq 2$, then the indices meet the conditions \eqref{item:divisible}-\eqref{item:thm1-3} of Theorem \ref{thm main}: The first two conditions are obvious,  and the third follows from observing that
 \[
 M_d(1) = \frac1d \sum_{e\mid d} \mu(e) 1^{d/e} = 0
 \]
for all $d\geq 2$, so $(x-1)\mid M_d(x)$ in $\QQ[x]$. 
Thus Theorem \ref{thm intro main} guarantees that $\Phi_{f,1,1}(x)$ divides $\Phi_{f,d}(x) - 1$.
This means that if $\alpha$ is any root of $\Phi_{f,1,1}(x)$, then
\begin{equation}
\label{eqn:pre-benedetto}
  \Phi_{f,d}(\alpha) = \prod_{\substack{\beta\\ \Phi_{f,d}(\beta)=0}} (\alpha - \beta) = 1
\end{equation}
where the roots of $\beta$ of $\Phi_{f,d}$ are counted with multiplicity.
Taking the product of the identities \eqref{eqn:pre-benedetto} as $\alpha$ varies over the roots of $\Phi_{f,1,1}(x)$ with multiplicity gives
\[
  \Res(\Phi_{f,1,1}, \Phi_{f,d}) 
  = \prod_{\substack{\alpha\\ \Phi_{f,1,1}(\alpha)=0}}
  \prod_{\substack{\beta\\ \Phi_{f,d}(\beta)=0}} (\alpha - \beta) = 1.
\]
This resultant may also be expressed as,
\[
  \Res(\Phi_{f,1,1}, \Phi_{f,d}) 
  = \prod_{\substack{\beta\\ \Phi_{f,d}(\beta)=0}} \Phi_{f,1,1}(\beta) = 1
\]
 which gives us the desired result.
\end{proof}

 To see why this generalizes Benedetto's result, observe that when $f(x)=x^2+c$, one can check that
 \[
 \Phi_{f,1,1}(x) = \frac{\Phi_{f,1}(f(x))}{\Phi_{f,1}(x)} = \frac{f^2(x) - f(x)}{f(x) - x} = f(x)+x
 \]
 and we recover the result that $f(\alpha)+\alpha$ is a dynamical unit, although our multiplicative identity differs slightly from that of Benedetto, as it is a product over other points of formal period $d$ (that is, roots of $\Phi_{f,d}(x)$; for a review of the difference between formal and strict period, we refer the reader to \cite[\S 4.1]{ADS}), rather than the points directly in the cycle of $\alpha$. We can also easily find further examples of this sort:
\begin{cor}
 Suppose $K,\OO_K$ are as above and $f(x) = x^2 + b_1x + b_0\in \OO_K[x]$. Then for any $d\geq 2$, 
 \begin{equation}
 \prod_{\substack{\alpha\\ \Phi_{f,d}(\alpha)=0}} (f(\alpha) + \alpha + b_1) = 1,
\end{equation}
so $f(\alpha)+\alpha+b_1$ is a dynamical unit for any $\alpha\in\overline{K}$ of formal period $d\geq 2$. Likewise, if $f(x)=x^3 + 1\in \ZZ[x]$, then 
 \begin{equation}
 \prod_{\substack{\alpha\\ \Phi_{f,d}(\alpha)=0}} (1+ \alpha + \alpha^2 + 2\alpha^3 + \alpha^4 + \alpha^6) = 1.
\end{equation}
Thus if $\alpha\in \overline{K}$ is of formal period $d\geq 2$, then $1+ \alpha + \alpha^2 + 2\alpha^3 + \alpha^4 + \alpha^6$ is a dynamical unit.
\end{cor}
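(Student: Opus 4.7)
The plan is to derive both identities as direct instances of Proposition \ref{thm benedetto generalization}, which already gives $\prod_{\alpha}\Phi_{f,1,1}(\alpha) = 1$ over roots $\alpha$ of $\Phi_{f,d}$ counted with multiplicity for any monic $f \in \OO_K[x]$ of degree at least $2$. Thus the entire task reduces to computing $\Phi_{f,1,1}(x)$ explicitly for the two indicated families, after which the desired relations and unit statements follow immediately.

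For the quadratic case $f(x) = x^2 + b_1 x + b_0$, I would set $u := f(x) - x = \Phi_{f,1}(x)$ and write $f(x) = x + u$. Then
\[
f^2(x) = (x+u)^2 + b_1(x+u) + b_0 = x^2 + b_1 x + b_0 + u(2x + b_1 + u) = f(x) + u(2x + b_1 + u) - u + u,
\]
so a short bookkeeping gives $f^2(x) - f(x) = u\bigl(f(x) + x + b_1\bigr)$. Dividing by $u = f(x) - x$ yields $\Phi_{f,1,1}(x) = f(x) + x + b_1$, and substituting into Proposition \ref{thm benedetto generalization} gives the first identity; since $f$ is monic with coefficients in $\OO_K$, each factor $f(\alpha) + \alpha + b_1$ is an algebraic integer, so the identity forces each to be a unit.

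For $f(x) = x^3 + 1$, I would run the same substitution. With $u = f(x) - x = x^3 - x + 1$, expanding $f^2(x) = (x+u)^3 + 1$ gives $f^2(x) - x = u(1 + 3x^2 + 3xu + u^2)$, hence
\[
\Phi_{f,1,1}(x) = \frac{f^2(x) - f(x)}{f(x) - x} = 3x^2 + 3xu + u^2.
\]
Plugging in $u = x^3 - x + 1$ and simplifying is a routine polynomial expansion that produces $\Phi_{f,1,1}(x) = 1 + x + x^2 + 2x^3 + x^4 + x^6$, giving the second identity via Proposition \ref{thm benedetto generalization}.

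There is no real obstacle; the only part requiring care is verifying the algebraic simplification $\Phi_{f,1,1}(x) = 3x^2 + 3xu + u^2$ and the final expansion for the cubic example, both of which are mechanical. The conceptual content lies entirely in Proposition \ref{thm benedetto generalization}, and these corollaries are simply its explicit evaluation for two convenient one- and two-parameter families.
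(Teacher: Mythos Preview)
Your proposal is correct and matches the paper's intended argument: the corollary is stated without proof in the paper, immediately after Proposition~\ref{thm benedetto generalization} and the worked example $\Phi_{f,1,1}(x)=f(x)+x$ for $f(x)=x^2+c$, so the implicit proof is exactly what you do---apply the proposition and compute $\Phi_{f,1,1}$ explicitly. Your computations are right (though the ``$-u+u$'' in the quadratic display is a stray artifact you should clean up; the identity $f^2(x)-f(x)=u(2x+b_1+u)=u(f(x)+x+b_1)$ follows directly once you note $x^2+b_1x+b_0=f(x)$).
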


\subsection{Cyclotomic factors of necklace polynomials}
\label{sec cyclo factors of neck}
As discussed in the introduction, the most subtle condition in Theorem \ref{thm main} is $x^n - 1$ dividing $M_d(x)$. Theorem \ref{thm hyperplane} gives an alternative characterization of this divisibility in terms of hyperplane arrangements in finite abelian groups.

\begin{defn}
For $n\geq 1$, let $\U_n := (\ZZ/(n))^\times$ denote the multiplicative group of units modulo $n$ and let $\h{\U}_n := \mathrm{Hom}(\U_n, \CC^\times)$ denote the group of Dirichlet characters of modulus $n$. If $q \in \U_n$, then the \emph{hyperplane} $\H_q \subseteq \h{\U}_n$ is the set
\[
    \H_q := \{\chi \in \h{\U}_n: \chi(q) = 1\}.
\]
\end{defn}

\begin{thm}
\label{thm hyperplane}
Let $d, n \geq 1$. Then $x^n - 1$ divides $M_d(x)$ if and only if
\[
    \h{\U}_n \subseteq \bigcup_{\substack{p\mid d\\p\nmid n}}\H_p.
\]
\end{thm}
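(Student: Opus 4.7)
The plan is to reduce $x^n - 1 \mid M_d(x)$ to a collection of vanishing conditions indexed by divisors of $n$. Writing $x^n - 1 = \prod_{m \mid n}\Phi_m(x)$ as a product of pairwise-coprime cyclotomic polynomials, $x^n - 1 \mid M_d(x)$ if and only if $\Phi_m \mid M_d$ for every $m \mid n$, equivalently $M_d(\zeta_m) = 0$ for a primitive $m$-th root of unity $\zeta_m$. I would then analyze each such vanishing via a Dirichlet-character decomposition of $\QQ(\zeta_m)$ and combine the resulting conditions through the stratification of $\widehat{\U}_n$ by conductor.

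Fix $m \mid n$ and consider the vanishing $M_d(\zeta_m) = 0$. For a primitive Dirichlet character $\chi$ modulo $m$, extend $\chi$ to a ring homomorphism $\chi : \QQ\Psi \to \CC$ via $\chi([k]) := \chi(k)$ with the convention $\chi(k) = 0$ when $\gcd(k, m) > 1$. The key factorization from the necklace-operator discussion,
\[
\varphi_d \;=\; [d/d']\prod_{p \mid d}([p]-[1]),
\]
immediately yields
\[
\chi(\varphi_d) \;=\; \chi(d/d')\prod_{p \mid d}(\chi(p)-1).
\]
A Gauss-sum projection argument, using the isotypic projector $\pi_\chi = \frac{1}{\phi(m)}\sum_{j \in \U_m}\chi^{-1}(j)\sigma_j$, then shows $\pi_\chi(\varphi_d \cdot \zeta_m) = \chi(\varphi_d)\, v_\chi/\phi(m)$, where $v_\chi = \sum_{j \in \U_m}\chi^{-1}(j)\zeta_m^j$ is the nonzero Gauss sum attached to the primitive character $\chi$. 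Hence the primitive $\chi$-isotypic component of $\varphi_d \zeta_m$ vanishes precisely when $\chi(\varphi_d) = 0$, which by the product formula happens exactly when some $p \mid d$ with $p \nmid m$ satisfies $\chi(p) = 1$, i.e.\ when $\chi \in \H_p$.

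To pass from the per-$m$ conditions to the single global statement, I would use the standard identification
\[
\widehat{\U}_n \;=\; \bigsqcup_{m \mid n}\bigl\{\chi \in \widehat{\U}_n : \mathrm{cond}(\chi) = m\bigr\},
\]
matching primitive characters modulo $m$ (for $m \mid n$) with elements of $\widehat{\U}_n$; under this correspondence the hyperplane condition $\chi \in \H_p$ in $\widehat{\U}_n$ (for $p \nmid n$) is the same as $\chi(p) = 1$ on the primitive character at its conductor. Assembling the vanishing conditions across all $m \mid n$ gives exactly $\widehat{\U}_n \subseteq \bigcup_{p \mid d,\, p \nmid n}\H_p$.

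The main obstacle is the bookkeeping needed to confirm that the primitive-character analysis at each conductor $m$ fully captures the vanishing of $M_d(\zeta_m)$, since $\QQ(\zeta_m) \otimes \CC$ decomposes under $\U_m$ into isotypic components indexed by \emph{all} characters of $\U_m$, not just the primitive ones. One must check that the non-primitive $\chi$-isotypic components of $\varphi_d \cdot \zeta_m$ are controlled by the primitive analysis at the conductor of $\chi$ via explicit non-primitive Gauss-sum identities; this is the technical heart of the argument and is the reason the conductor stratification of $\widehat{\U}_n$ aligns precisely with the cyclotomic factorization of $x^n - 1$.
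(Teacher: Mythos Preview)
Your argument has a genuine gap at the step asserting that $\chi(\varphi_d) = 0$ ``happens exactly when some $p \mid d$ with $p \nmid m$ satisfies $\chi(p) = 1$.'' The product $\chi(\varphi_d) = \chi(d/d')\prod_{p\mid d}(\chi(p)-1)$ can also vanish because $\chi(d/d') = 0$, which occurs whenever the cocore $d/d'$ shares a prime with $m$. Your deferred claim that the non-primitive isotypic components are controlled by the primitive analysis at the conductor is likewise false. Take $d = n = 4$: here $\varphi_4 = [4]-[2]$, so the trivial character (conductor $1$) and the primitive $\chi_1$ modulo $4$ both give $\chi(\varphi_4) = 0$; thus \emph{every} primitive character of conductor dividing $4$ satisfies your vanishing condition. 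Yet $4M_4(x) = x^4 - x^2$ gives $M_4(i) = 1/2 \neq 0$, so $\Phi_4 \nmid M_4$, and indeed there are no primes $p \mid 4$ with $p \nmid 4$, so the hyperplane union is empty. Concretely, the trivial-character component of $\varphi_4 \cdot i$ in $\QQ(i)$ is nonzero even though the primitive analysis at conductor $1$ yields zero: the ``non-primitive Gauss-sum identities'' you invoke do not do what you need.

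The paper circumvents this by never applying characters to $\varphi_d$ directly. It passes to $\tilde d$, the largest factor of $d$ coprime to $n$, so that $\varphi_{\tilde d}$ lies in the group ring $\QQ[\U_n]$; there the characters give an embedding into $\CC^{\varphi(n)}$, and in the factorization $\varphi_{\tilde d} = [\tilde d]\prod_{p\mid \tilde d}(1 - [p]^{-1})$ the leading factor $[\tilde d]$ is a unit, so $\chi(\varphi_{\tilde d}) = 0$ is genuinely equivalent to $\chi \in \H_p$ for some $p \mid \tilde d$. The backward implication then follows at once from $\varphi_d = \varphi_{\tilde d}\,\varphi_{d/\tilde d}$. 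The forward implication needs one further observation: in $\QQ[x]/(x^n - 1)$, the projection of $dM_d(x)$ onto the span of $\{x^j : \gcd(j,n) = 1\}$ is $\pm \tilde d\, M_{\tilde d}(x)$. This projection step is the substitute for your Gauss-sum bookkeeping, and it is what actually makes the argument go through when $d$ and $n$ share prime factors.
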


\begin{proof}
As we argued in the proof of Proposition \ref{prop necklace vanishing}, Lemma \ref{lem free module} implies that $\varphi_d \equiv 0 \brmod{n}$ if and only if $\varphi_d x/d = M_d(x)$ is divisible by $x^n - 1$. Let $\tilde{d}$ be the largest factor of $d$ coprime to $n$. The group ring $\QQ[\U_n]$ naturally embeds into $\QQ\Psi_{0,n}$ as the $\QQ$-span of $[q]$ for $q \in \U_n$, and $\varphi_{\tilde{d}} \in \QQ[\U_n] \subseteq \QQ\Psi_{0,n}$. Observe that
\[
    \varphi_{\tilde{d}} = \sum_{e\mid \tilde{d}}\mu(e)[\tilde{d}/e] = [\tilde{d}]\prod_{p\mid \tilde{d}}(1 - [p]^{-1}) \in \QQ[\U_n],
\]
where the product is taken over all primes $p$ dividing $\tilde{d}$. Recall that each character $\chi \in \h{\U}_n$ extends to a ring homomorphism $\chi: \QQ[\U_n] \rightarrow \CC$. Thus if $\chi \in \h{\U}_n$,  then
\[
    \chi(\varphi_{\tilde{d}}) = \chi(\tilde{d})\prod_{p\mid \tilde{d}}(1 - \overline{\chi(p)}).
\]
If $\chi_i \in \h{\U}_n$ for $1 \leq i \leq \varphi(n)$ are the distinct characters of $\U_n$, then the map
\[
    \alpha \in \QQ[\U_n]  \longmapsto (\chi_1(\alpha),  \chi_2(\alpha), \ldots,  \chi_{\varphi(n)}(\alpha)) \in \CC^{\varphi(n)}
\]
is an embedding of rings. Hence $\alpha = 0$ in $\QQ[\U_n]$ if and only if $\chi_i(\alpha) = 0$ for all $\chi_i$. Thus $\varphi_{\tilde{d}} = 0$ in $\QQ[\U_n]$ if and only if for each $\chi \in \h{\U}_n$ there is some prime $p \mid \tilde{d}$ such that $\chi(p) = 1$. This is equivalent to $\h{\U}_n \subseteq \bigcup_{p\mid \tilde{d}} \H_p = \bigcup_{p\mid d, p\nmid n} \H_p$.

Hence if $\h{\U}_n \subseteq \bigcup_{p\mid \tilde{d}} \H_p$, then
\[
    dM_d(x) = \varphi_d x = (\varphi_{\tilde{d}}\varphi_{d/\tilde{d}}) x = \varphi_{\tilde{d}}\cdot  (\varphi_{d/\tilde{d}} x) \equiv 0 \cdot (\varphi_{d/\tilde{d}} x) \equiv 0 \bmod x^n - 1.
\]

Conversely, suppose that $x^n - 1$ divides $M_d(x)$. Let $U \subseteq \QQ[x]/(x^n - 1)$ denote the $\QQ$-subspace spanned by $x^j$ with $j$ coprime to $n$, and let $S_d(x) := dM_d(x)$. Observe that
\[
    S_d(x) = \varphi_d x = \varphi_{d/\tilde{d}}S_{\tilde{d}}(x)= \sum_{e \mid d/\tilde{d}}\mu(d/\tilde{d}e)S_{\tilde{d}}(x^e). 
\]
Since $\tilde{d}$ is the largest factor of $d$ coprime to $n$, it follows that each $e > 1$ dividing $d/\tilde{d}$ shares a nontrivial common factor with $n$. Hence the $U$-component of $S_d(x)$ is $\pm S_{\tilde{d}}(x)$. Hence if $S_d(x) \equiv 0 \bmod x^n - 1$, then it must be the case that $S_{\tilde{d}}(x) \equiv 0 \bmod x^n - 1$. As argued above, this is equivalent to $\h{\U}_n \subseteq \bigcup_{p\mid \tilde{d}}\H_p$.
\end{proof}

\begin{remark}
Theorem \ref{thm hyperplane} is closely related to \cite[Thm. 1.13]{hyde_cyclo} but with a slightly different scope. Neither result directly implies the other. 
\end{remark}

\begin{example}
\label{ex m = 65}
The following example is adapted from \cite[Ex. 2.8]{hyde_cyclo}. Let $d = 440512358437 = 47^2 \cdot 73 \cdot 79 \cdot 151 \cdot 229$ and let $n = 65$. The group $\h{\U}_{65} \cong (\ZZ/(65))^\times$ decomposes as $\h{\U}_{65} \cong \ZZ/(4)^2 \times \ZZ/(3)$. Note that each hyperplane $\H_p \subseteq \h{\U}_{65}$ is a subgroup, hence factors as $\H_p \cong \H_p^{(4)} \times \H_p^{(3)}$ with $\H_p^{(4)} \subseteq \ZZ/(4)^2$ and $\H_p^{(3)} \subseteq \ZZ/(3)$. In this case, each of the hyperplanes $\H_p$ with $p \mid d$ is trivial in the 3-torsion $\H_p^{(3)} = \ZZ/(3)$. Thus it suffices to consider the $4$-torsion $\H_p^{(4)}$ of each hyperplane $\H_p$.

Identifying the $4$-torsion of $\h{\U}_{65}$ with the additive group $\ZZ/(4)^2$, the group $\U_{65} := (\ZZ/(65))^\times$ of units modulo $65$ has a compatible isomorphism $\rho: \U_{65} \rightarrow \langle x, y : 4x = 4y = 0\rangle$ with the dual group of $\ZZ/(4)^2$. With respect to such an isomorphism, (the $4$-torsion of) each hyperplane $\H_p$ may be realized as the vanishing set of a homogeneous linear form, hence the hyperplane terminology.

The units $47$ and $151$ generate a $\ZZ/(4)^2$ subgroup of $\U_{65}$, so we may choose coordinates $\rho$ such that $x := \rho(47)$ and $y := \rho(151)$. Then the hyperplanes $\H_p$ may be visualized as lines in the ``plane'' $(\RR/4\ZZ)^2$. Each of the five distinct primes dividing $d$ corresponds to a different colored line in the diagram below. For example, since $229 \equiv 47^2 \cdot 151^{-1} \bmod 65$, the (4-torsion of the) hyperplane $\H_{229}$ is the solution set of $2x - y = 0$ in $\ZZ/(4)^2$.
Figure \ref{fig:65 body} shows the linear forms defining each line with respect to this choice of coordinates.

\begin{figure}[h]
    \centering
    \includegraphics[scale=.19]{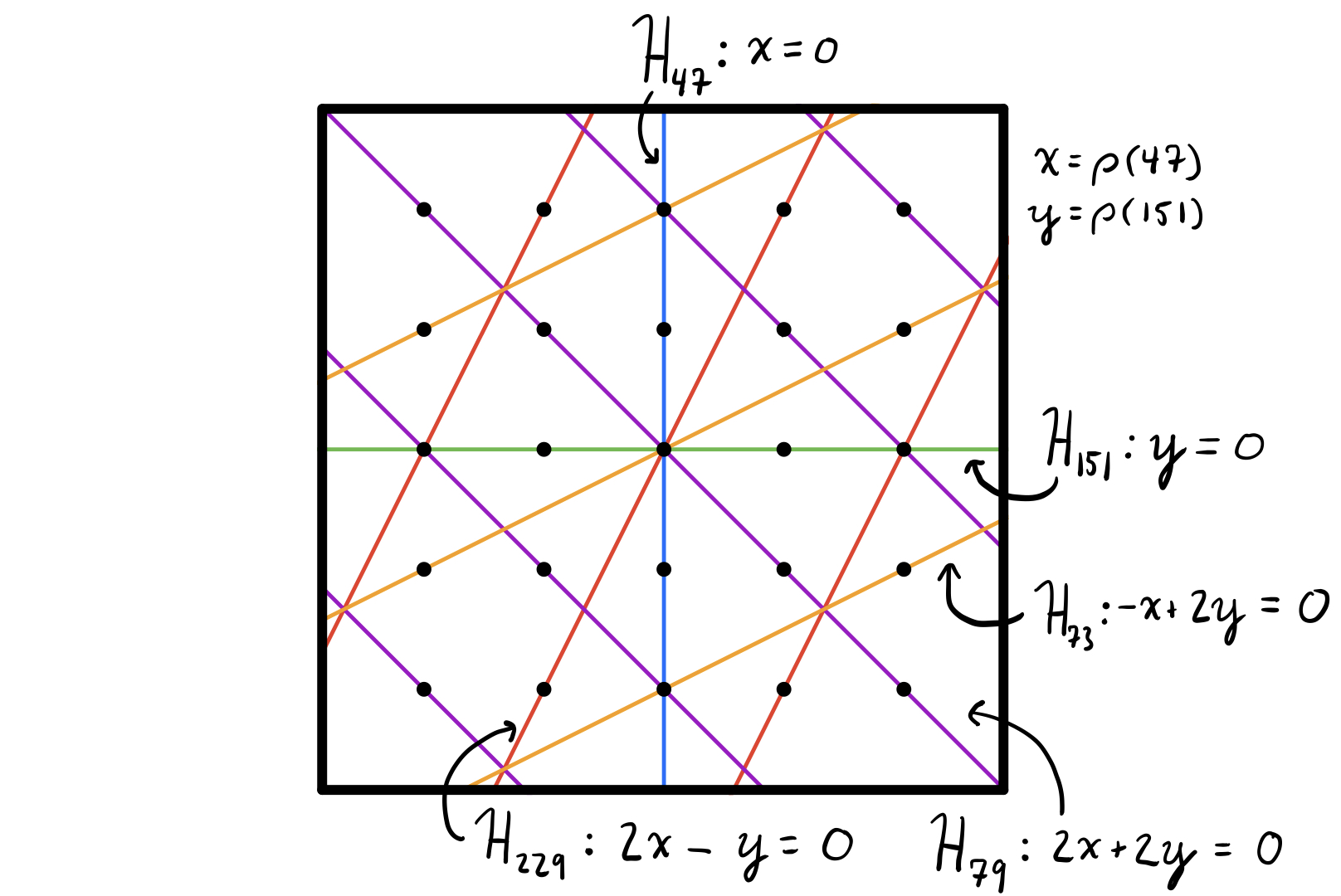}
    \caption{The lattice points in $(\RR/4\ZZ)^2$ may naturally be identified with $\ZZ/(4)^2$.}
    \label{fig:65 body}
\end{figure}

Since the five lines $\H_p$ with $p\mid d$ cover all of $\ZZ/(4)^2$, it follows that
$
    \h{\U}_{65} \subseteq \bigcup_{p\mid d}\H_p,
$
with $d = 440512358437$. The cocore of $d$ is 47. Hence Theorem \ref{thm intro main} implies that for any polynomial $f(x) \in K[x]$ with degree at least 2 and any $m \leq 47$,
\[
    \Phi_{f,m,65}(x) \text{ divides } \Phi_{f,440512358437}(x) - 1.
\]

By drawing other arrangements of lines covering $\ZZ/(4)^2$ and finding primes in the corresponding congruence classes modulo $65$ (which must exist by Dirichlet's theorem on primes in arithmetic progressions) we may construct several other nontrivial examples of $d$ for which $\h{\U}_{65} \subseteq \bigcup_{p\mid d}\H_p$. Three examples are given in Figure \ref{fig: arrangements}.

\begin{figure}
\centering
\includegraphics[scale=.19]{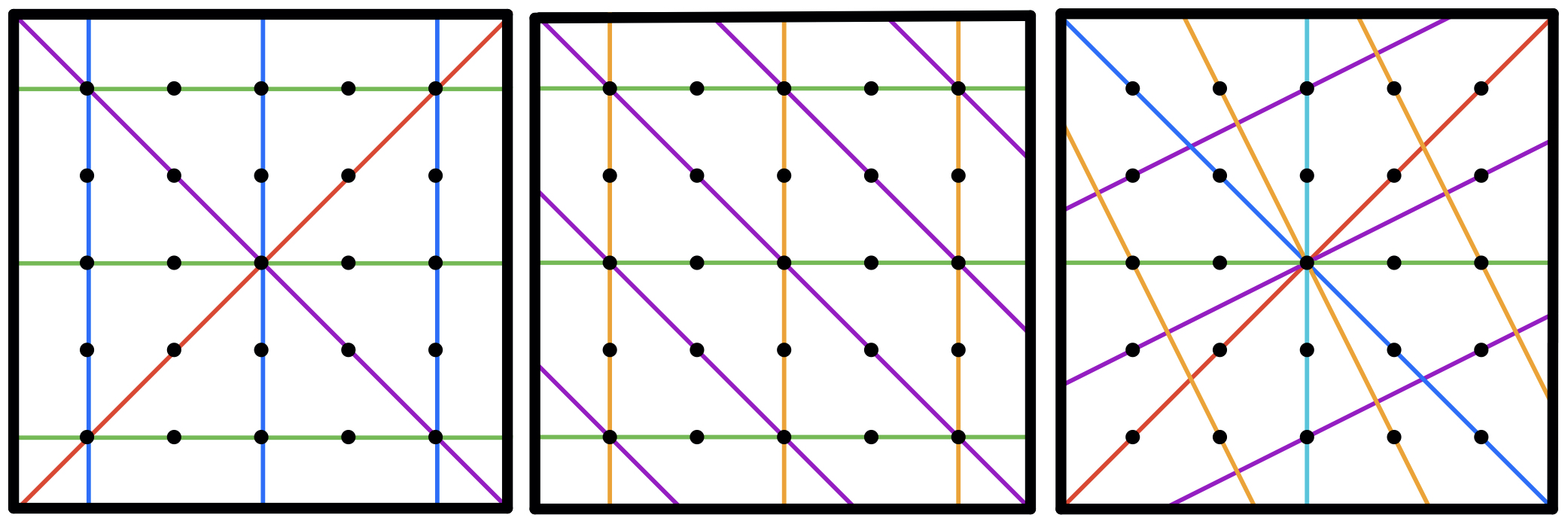}
\caption{}
\label{fig: arrangements}
\end{figure}

Values of $d$ corresponding to the three arrangements in Figure \ref{fig: arrangements} are, respectively,
\begin{align*}
    d_1 &= 157\cdot 181 \cdot 337 \cdot 389\\
    d_2 &= 79 \cdot 181 \cdot 389\\
    d_3 &= 47 \cdot 109 \cdot 151 \cdot 157 \cdot 317 \cdot 337.
\end{align*}
Each of these $d_i$ are squarefree and coprime to $65$, so it follows that 
\[
    \Phi_{f,m,65}(x) \text{ divides } \Phi_{f,d_i}(x) - 1
\]
for each $m = 0, 1$ and each $d_i$.
\end{example}

\end{document}